\documentclass[]{siamltex}
\usepackage{amssymb,amsmath}
\usepackage{color}
\usepackage{graphics}
\usepackage{graphicx}
\usepackage{latexsym}
\usepackage{arydshln}

\def\rr{\mathbb R}

\def\AA{\mathcal A}

\def\rank{{\rm rank}}
\def\Div{{\rm div}}

\def\Ker{{\rm Ker}}

\def\div{{\rm div\,}}

\usepackage{color}

\newtheorem{theoreme}{Theorem}[section]                   
\newtheorem{lem}{Lemma}[section]
\newtheorem{example}{Example}[section]
\newtheorem{remark}{Remark}[section]

\usepackage[colorlinks=true]{hyperref}
\hypersetup{urlcolor=blue, citecolor=red}
\newcommand{\Red}{\color{black}}

\title{\bf Polynomial Stability  for Weakly Coupled System with Partial Controls	\thanks{This work is supported by the National Natural Science Foundation of China (grants No. 12271035, 12131008) and Hebei Natural Science Foundation (grant No. A2025105012).}}

\author{
Bopeng Rao\thanks{Institut de Recherche Math\'{e}matique Avanc\'{e}e, Universit\'{e} de Strasbourg, Strasbourg 67084, France  (email:
bopeng.rao@math.unistra.fr).}
		  \and
		Qiong Zhang\thanks{Corresponding Author. School of Mathematics and Statistics, Beijing Key Laboratory on MCAACI,
			Beijing Institute of Technology, Beijing 100081, China (email: zhangqiong@bit.edu.cn)}
	}

\begin{document}
		
\maketitle

\begin{abstract}
We study the stability of general weakly coupled systems subject to a reduced number of local or boundary controls. We show that, under Kalman's rank condition, the exponential stability of the underlying scalar equation implies polynomial stability of the full coupled system. Moreover, the decay rate remains unchanged regardless of the number of equations in the system. The proof relies on resolvent estimates and a clever exploitation of Kalman's rank condition to ensure effective transmission of damping across the coupled equations.
The abstract result is applied to several concrete models, including systems of wave equations with local viscous, local viscoelastic, or boundary damping; systems of plate equations with internal damping; and thermoelastic systems of  type III. Moreover, the optimality of the decay rate  is established via spectral analysis.
\end{abstract}

{\small {\it Keywords. coupled partial differential equations, control, stability,
Kalman's  rank conditions}

\smallskip

{\bf MSC (2010)}: 35B65, 35E99, 35K90, 35L90, 47D03, 93B07,  93D05

\bigskip

\section{Introduction and the main result}

Let {\Red $ H_1\subset H_0$} be two separable Hilbert spaces equipped with the inner products
 $\langle\!\langle \cdot,  \cdot\rangle\!\rangle_{H_0}$ and $\langle\!\langle \cdot,  \cdot\rangle\!\rangle_{H_1}$,  respectively. We assume that the embeddings $  {\Red H_1\subset H_0}$ are dense and compact.
Taking ${H_0}$ as the pivot space, we denote by $H_{-1}$ the dual of  $H_1$.
In particular, the canonical duality pairing between $H_{-1}$  and  $H_1$  satisfies
$$\langle\!\langle\phi, \psi\rangle\!\rangle_{H_{-1}; H_1}=\langle\!\langle \phi, \psi\rangle\!\rangle _{H_0}, \quad  \forall \psi\in  H_{1}, \; \phi\in {H_0}. $$

Define $L\,: \, H_1 \to  H_{-1}$  as the duality mapping
\begin{equation}\langle\!\langle L\phi, \psi\rangle\!\rangle_{H_{-1}; H_1}
=\langle\!\langle \phi, \psi\rangle\!\rangle _{H_1},\quad \forall\;  \phi, \; \psi\in H_1.\label{1.1}\end{equation}
Let $g\,: \,H_1 \to Z\subseteq H_{-1}$ be a bounded linear operator  into another  Hilbert space $Z$,   with adjoint operator $g^*$ defined by
\begin{align}\label{1.11}
 \langle\!\langle g^*g\phi, \psi\rangle\!\rangle_{H_{-1}; H_1}=
 \langle\!\langle   g\phi, g\psi\rangle\!\rangle_{Z}, \quad \forall\; \phi, \; \psi \in H_1.
 \end{align}
We consider the scalar evolution equation with  $\lambda \geqslant 0$:
\begin{equation}\label{1.6}
\left\{\begin{array}{l}
u''+Lu +\lambda u +g^*gu' =0, \;\;t>0
\\
 \noalign{\medskip}
u(0) =u_0,\;\; u'(0) = u_1, \end{array}\right.\end{equation}
where $ ``\  '  \ "$ denotes the time derivative. Assume that the initial data $(u_0,\;u_1) \in H_1 \times H_0$.
Since the damping operator $g^*g$ is non-negative, this abstract formulation encompasses a wide variety of partial differential equations, including models with global, local, or boundary damping, as well as coupled systems such as thermoelastic models. Well-posedness of such systems follows from standard arguments (see \cite{kavian2022}).

Moreover, we assume that the system \eqref{1.6} is exponentially stable, i.e., there exists positive constants $M,\; \omega$ such that  the solution satisfies
{\Red $$
\|(u(t),\, u'(t))\|_{H_1\times H_0} \leqslant Me^{-\omega t}\|(u_0,\, u_1)\|_{H_1\times H_0}, \;\; t >0.
$$}
We analyze the stability of  weakly coupled  systems consisting of $N$ scalar equations of the form described above, where the coupling occurs through the displacement terms and controls are applied to only a subset of the state components:

The stabilization and control of   coupled systems  is a fundamental problem in control theory.
 In finite-dimensional systems, the classical Kalman's rank condition ensures exponential stability with arbitrary decay rates. For infinite-dimensional systems, however, the situation is more subtle.
 This paper introduces a systematic methodology that combines   known exponential stability   for scalar PDE with algebraic arguments to achieve polynomial stability for weakly coupled systems with a reduced number of controls.  
 Our framework  provides a simple criterion to verify polynomial stability in a broad class of coupled PDEs, including coupled hyperbolic PDEs with local  or  boundary controls, coupled plate equations with local  or boundary   controls, and coupled thermo-elastic systems.
 In the most cases, the obtained  polynomial  decay rates are sharp.

This problem has a long history.
Russell \cite{russell1993}, inspired by thermo-elastic systems, introduced an abstract framework for indirectly damped systems, in which the damping arises from the thermal conduction, that   may follow  Fourier's law,  Cattaneo's law or  the Gurtin-Pipkin's law.  His work showed that damping applied to one component of the state can be transmitted to others through coupling, thereby stabilizing the entire system. Since then, many works have sought conditions under which damping and coupling operators guarantee  system  stabilization, yielding results ranging from exponential to polynomial decay (see, e.g., \cite{delloro2013, han2023, hao2015}).

A related direction  concerns  the stabilization of coupled systems using a reduced number of controls, where the key goal is to determine the minimal set of equations requiring actuation. For instance, coupled wave-wave models with control applied to a single equation  have been studied in \cite{liurao2007}, where weak coupling leads to polynomial decay. In contrast, strong coupling yields exponential stability, as shown in \cite{alabauwang2017}.
In \cite{alabau2002,   guglielmi2017}, the authors introduced an abstract framework for weakly coupled systems of two second-order evolution equations and analyzed their  polynomial stability. Their results apply to   classical  coupled models-such as wave-wave, wave-Petrovsky, and Petrovsky-Petrovsky systems with single internal or boundary damping.
Further contributions on controllability and synchronization can be found in \cite{ duprez2016, fernandez2010, liard2017, mauffrey2013, steeves2019, book} and references therein, while more recent works extend these results to nonlinear, degenerate, or star-shaped systems \cite{hassi2020, lefter2022, raoliuzhang}.

For PDE models such as wave and plate equations, stabilization via local or boundary damping has been well studied under the geometric control condition  (see, e.g.,\cite{bardos1992, komornik, lagnese1989}).
Furthermore, for abstract systems governed by a single equation with global damping, stability and regularity properties have been thoroughly investigated (see, e.g., \cite{chen1989,chen1990,delloro2021,liu2015}).
However, for  abstract systems  with local damping, the corresponding theory remains less developed. Existing approaches (e.g., \cite{chen1991, chen1989, kavian2022}) establish exponential or polynomial stability by carefully analyzing the damping operator on eigenspaces.

In this work, we introduce  a general framework to study weakly coupled  PDE systems with partial local or boundary damping. Our aim is to  characterize  how the algebraic structures of the coupling   and control terms  influence system stabilization and decay rates.  By introducing a Kalman's rank condition to ensure sufficient coupling strength, and assuming the control can exponentially stabilizing the corresponding scalar problem, we  establish polynomial stability for the full coupled system.
 Our approach combines PDE stability theory with algebraic arguments, providing a simple and effective criterion.
  We apply our main results on coupled wave equations with local or boundary damping, coupled plate equations with internal damping, and coupled thermoelastic systems. For specific cases,   the optimality of the decay rates is  confirmed using spectral analysis.

The remainder of the paper is organized as follows. Section 2 states the main results and recalls basic properties of the Kalman's rank condition. Section 3 contains the proofs. Section 4 applies the abstract results to several PDE models, including coupled wave, plate, and thermoelastic systems. Section 5 establishes the sharpness of the results and discusses directions for future research.

\bigskip

\section{Polynomial stability and preliminaries}
In what follows, we consider coupled  systems composed of scalar equations of the form \eqref{1.6}.
Let $\mathcal H_0,\; \mathcal Z,\; \mathcal H_1$ denote the product spaces associated with
$H_0, \; H_1,\; Z$, respectively:
\begin{equation}
\mathcal H_0 = (H_0)^N,\quad \mathcal H_1 = (H_1)^N,\quad \mathcal Z = Z^N.
\end{equation}
 For a vector $U= (u^{(1)},  \ldots, u^{(N)})^T $, we define the vector-value operators $\mathcal L$ and  $\mathcal G$ as
\begin{equation} \label{z1.6}
\mathcal L U = \begin{pmatrix}Lu^{(1)}\\  \vdots\\ Lu^{(N)}\end{pmatrix}, \quad \mathcal GU = \begin{pmatrix} gu^{(1)}\\  \vdots\\ g u^{(N)}\end{pmatrix}.
\end{equation}
 We   consider the following second-order evolution system:
\begin{equation}
\left\{
\begin{array}{l}
U'' + \mathcal L U +AU  + D\mathcal G^*\mathcal GU' =  0, \;\; t>0,
\\  \noalign{\medskip}
U(0) = U_0, \;\; U'(0) =U_1,
\end{array}
\right.
\label{1.3}
\end{equation}
where  the initial data $(U_0, U_1) \in \mathcal H_1 \times \mathcal H_0 $, and the matrices
 $A=(a_{ij})_N$ and $D=(d_{ij})_N$ are symmetric positive semi-definite, representing the coupling matrix and the control matrix, respectively.

Define the linear operator $\mathcal A$ by
\begin{equation}\label{3.1}\mathcal A(U,  V)= (V, -\mathcal  LU -AU -  D\mathcal G^*\mathcal GV)\end{equation}
with  domain
\begin{equation}
\frak D(\mathcal A) = \big\{(U, V)\in \mathcal H_1\times  \mathcal H_1: \quad
\mathcal  LU + AU+ D\mathcal G^*\mathcal GV\in  \mathcal H_0\big\}.
\label{3.2}
\end{equation}
Then system \eqref{1.3} can be rewritten in the abstract form:
\begin{equation}
\left\{\begin{array}{l}(U, U')' =\mathcal A(U, U') ,\quad t>0,
\\   \noalign{\medskip}   \displaystyle
(U(0), U'(0)) = (U_0, U_1) \in \mathcal  H_1\times \mathcal H_0.
\end{array}\right.\end{equation}

Due to the presence of the coupling term
 $AU$, system \eqref{1.3} may be referred to as a weakly coupled system, particularly in contrast to systems with velocity or higher-order coupling. It is clear that the rank of $D$ determines the number of effective controls.
It is known that system \eqref{1.3} is not exponentially stable when $\rank (D)<N$ (see \cite{Siam2016,raoliuzhang}). This is to be expected, since in this case the number of controls is reduced relative to the number of state components.

Our goal is to analyze the polynomial stability properties of the system under certain assumptions on the matrices $A$ and $D$.
We thus introduce the following  Kalman's rank condition (\cite{book})
\begin{equation}
\rank  (D,\, AD,\, \ldots,\,  A^{N-1}D)= N.
\label{1.4}\end{equation}

 The following result gives the polynomial stability of the coupled system \eqref{1.3}.

 \medskip

\begin{theoreme}\label{th1.1}   Assume that   the following conditions hold:
\begin{enumerate}
\item[{\rm (i)}]
Let   $A$ and $D$  be  symmetric and positive semi-definite matrices. Assume that $A, D $  satisfy the Kalman's rank condition \eqref{1.4}
and  $\|AD-DA\|$ is sufficiently small;
\item[{\rm (ii)}] The scalar problem \eqref{1.6}
 is  exponentially stable in the space $H_1\times H_0$;
 \item[{\rm (iii)}] For any function $u\in H_1$,   the following regularity condition holds:
\begin{eqnarray}
\|gu\|_{Z} \leqslant c\|u\|^{r}_{H_1}\|u\|^{1-r}_{H_0}, \quad 0\leqslant  r\leqslant 1.
\label  {1.5}
\end{eqnarray}
\end{enumerate}
  \noindent
 Then there exists a constant $M>0$, such that   for any  initial data $(U_0, U_1) \in \frak D(\mathcal A)$, the solution $U$ to system  \eqref{1.3}  satisfies the following polynomial decay estimate:
\begin{equation}
\|(U, U')\|_{\mathcal  H_1\times \mathcal H_0} \leqslant \frac{M}{t^{1/2(1+r)}}\|(U_0, U_1)\|_{\frak D(\mathcal A)},\;\; t\geqslant 1.
\label  {1.7}
\end{equation}
\end{theoreme}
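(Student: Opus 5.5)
We will use the Borichev--Tomilov characterization of polynomial decay. If $\mathcal A$ generates a bounded $C_0$-semigroup on $\mathcal H=\mathcal H_1\times\mathcal H_0$, $i\mathbb R\subset\rho(\mathcal A)$, and
\[
\|(i\beta-\mathcal A)^{-1}\|\leqslant C|\beta|^{\ell}\quad\text{for }|\beta|\to\infty,
\]
then $\|e^{t\mathcal A}X_0\|\leqslant Mt^{-1/\ell}\|X_0\|_{\frak D(\mathcal A)}$. We therefore aim to prove the resolvent bound with $\ell=2(1+r)$.

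\textbf{Dissipativity and the imaginary axis.} Since $D\geqslant0$, we get $\mathrm{Re}\langle \mathcal A X,X\rangle=-\|D^{1/2}\mathcal G V\|^2\leqslant 0$. Lumer--Phillips, with the energy norm including $A$ (equivalent to the $\mathcal H$ norm when $\lambda>0$ or by compactness), gives contraction. The absence of imaginary eigenvalues comes from Kalman: if $\mathcal AX=i\beta X$, then $D\mathcal G^*\mathcal GV=0$, so $U$ solves the undamped system. We diagonalize $A$ in an eigenbasis and use unique continuation for $g$, which is inherited from (ii): an eigenvector of $L$ with $gu=0$ would give a non-decaying solution of \eqref{1.6}. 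Invariance of $\Ker D$ under the dynamics then forces $U=0$, by the usual Kalman argument that the largest $A$-invariant subspace in $\Ker D$ is trivial. Compactness of the resolvent then yields $i\mathbb R\subset\rho(\mathcal A)$.

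\textbf{Resolvent estimate.} Let $(i\beta-\mathcal A)(U,V)=(F,G)$ with $\|(F,G)\|\leqslant1$. The dissipation identity gives $\|D^{1/2}\mathcal GV\|_{\mathcal Z}^2\leqslant\|X\|$. The key is to transfer this information step by step along the Kalman chain.

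\emph{Step 1.} Since (ii) holds, the scalar resolvent $R(\beta)=(-\beta^2+L+\lambda+i\beta g^*g)^{-1}$ is bounded uniformly, as a map $H_0\to H_1$, by $C/|\beta|$-type bounds (Huang--Prüss). We then project the equation $-\beta^2U+\mathcal LU+AU+i\beta D\mathcal G^*\mathcal GU=\tilde F$ onto directions.

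\emph{Step 2.} Consider components $p_k=(A^{k}D)^T$-projections. For $w=d^TU$ with $d$ in the range of $D$ (using the commutator smallness to approximately commute $A$ and $D$, i.e. treating $D,AD,\dots$ as nearly simultaneously diagonalizable), write the scalar equation for $w$ with damping $g^*g$ plus a perturbation coming from $A$ coupling and $D-\text{(its action)}$. Via $R(\beta)$ this bounds $\|w\|$ by $\|\mathcal GV\|$ terms times $\beta$ factors. Using (iii), $\|gu\|\leqslant c\|u\|_{H_1}^r\|u\|_{H_0}^{1-r}$, and $\|u\|_{H_0}\sim\|V\|/|\beta|$, each passage from controlled components to the next Kalman level $AD$ costs a factor $|\beta|^{1+r}$. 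The coupling $A$ contributes an $O(1)$ term in an equation whose natural scale is $\beta^2$, so the uncontrolled component is recovered from a bounded multiple of the controlled residual. Hence the components in $\mathrm{Im}(AD)$ are estimated with loss $|\beta|^{1+r}$.

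\emph{Step 3.} Iterating naively would give a rate depending on $N$. The claim, however, is $N$-independent. Therefore, instead of iterating, we will use that the Kalman condition for symmetric $A,D$ with $[A,D]$ small implies that, in an eigenbasis of $A$, every eigenvector $e_j$ satisfies $De_j\neq0$, roughly. Projecting onto each eigenvector of $A$ gives scalar equations $-\beta^2u_j+Lu_j+(\lambda+\alpha_j)u_j+i\beta\sum_k (e_j^TDe_k)g^*gu_k=f_j$. Here the diagonal damping is $\delta_j>0$, and the off-diagonal terms are small by the commutator. Each of these is an exponentially stable scalar problem perturbed by small cross terms. A multiplier/interpolation estimate then gives the single loss $|\beta|^{2(1+r)}$ at once, and a smallness absorption controls the cross terms.

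The main obstacle is Step 3: making the cross-damping terms $i\beta(e_j^TDe_k)g^*gu_k$ absorbable. They carry a factor $\beta$ and are controlled only via $\|\mathcal GV\|$, with interpolation (iii). I would first try to bound them using the dissipation $\|D^{1/2}\mathcal GV\|^2\leqslant\|X\|$ together with the smallness of $\|AD-DA\|$, accepting the $|\beta|^{2(1+r)}$ loss. This is exactly where the exponent $1/(2(1+r))$ should emerge.
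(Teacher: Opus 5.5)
There is a genuine gap, and it is exactly at what you call the main obstacle: Step 3 cannot work as designed.

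\emph{Why the cross-damping terms cannot be absorbed.} In an eigenbasis of $A$, the coefficients $D_{jk}=e_j^TDe_k$ are not small relative to the diagonal ones when $\rank D<N$. A rank-deficient $D\geqslant 0$ is never strictly diagonally dominant. For example, with $A=\mathrm{diag}(\lambda_1,\lambda_2)$ and $D=dd^T$, the off-diagonal entry $d_1d_2$ is the geometric mean of the diagonal entries. The quantity $\|AD-DA\|=|\lambda_1-\lambda_2|\,|d_1d_2|$ is small because $\lambda_1-\lambda_2$ is small, not because $D$ is nearly diagonal. More decisively, if the terms $i\beta D_{jk}g^*gu_k$ could be absorbed around uniformly stable scalar problems, you would obtain a $\beta$-uniform resolvent bound, i.e.\ exponential stability. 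The paper recalls that this fails for $\rank D<N$. Writing these terms as $g^*(D\mathcal GV)_j-D_{jj}g^*gv_j$ so as to use the dissipation does not help either: it only moves the diagonal damping to the right-hand side.

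\emph{How the paper avoids the problem.} It never estimates cross-damping in norm. It argues by contradiction with $\beta_n^{2(1+r)}\|(i\beta_n-\mathcal A)W_n\|\to0$. Dissipation then gives $\|D\mathcal GV_n\|_{\mathcal Z}=o(\beta_n^{-(1+r)})$, and (iii) gives $\|\mathcal GU_n\|_{\mathcal Z}=O(\beta_n^{r-1})$. Hence $\langle(D\mathcal GV_n)^{(i)},gu_n^{(j)}\rangle_Z=o(\beta_n^{-2})$; this is the only place the weight is spent. Testing the component equations (in the eigenbasis of $A$) against one another yields $(\lambda_k-\lambda_l)\langle u_n^{(i)},u_n^{(j)}\rangle_{H_0}=o(\beta_n^{-2})$. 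Kalman then enters through Lemma~\ref{th2.4}, giving $\|\beta_nU_n\|^2\leqslant c\|\beta_nDU_n\|^2+o(1)$. Next the paper projects onto eigenvectors $p_j$ of $D$, not of $A$. There the damping is diagonal ($\delta_jg^*g$), and the only coupling is the zeroth-order term $p_j^T(D-A)U_n=O(\beta_n^{-1})$ in $H_0$. The uniform scalar resolvent bound gives $DV_n\to0$, and equipartition finishes the argument. Your choice of basis puts the $\beta$-weighted terms off the diagonal; the paper's choice leaves only bounded ones there.

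\emph{The spectral step also has a gap.} The ``usual Kalman argument'' differentiates the observation $D\mathcal GU=0$ along the undamped flow to reach $\mathcal G(DA^kU)=0$. This needs $\mathcal G$ to commute with $\mathcal L$ in a suitable sense, as for constant-coefficient damping on an open set, and it fails for general $g$. Item (iii) of the remark after the theorem states that Kalman alone does not give uniqueness for \eqref{3.7}. Here is a concrete counterexample in \eqref{4.15}. Take $A=\mathrm{diag}(0,2\pi^2)$ and $D=dd^T$ with $d=(1,1)^T$, so Kalman holds. Then $U=(\sin\frac{3\pi x}{2},\sin\frac{\pi x}{2})^T$ satisfies $-\frac{9\pi^2}{4}U-U_{xx}+AU=0$, $U(0)=0$, $U_x(1)=0$ and $DU(1)=0$. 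So $i\frac{3\pi}{2}$ is an eigenvalue of $\mathcal A$, even though each component has nonzero trace at $x=1$.

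This is exactly where the paper uses the smallness of $\|AD-DA\|$. It applies $D$ to the eigen-system, uses the scalar resolvent bound with the commutator term as forcing, and combines this with exact orthogonality and Lemma~\ref{th2.4}. You use the commutator in the resolvent estimate instead, which is the wrong place.

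\emph{Minor points.} $\mathcal A^{-1}$ need not be compact (Kelvin--Voigt damping), so surjectivity should go through Fredholm's alternative for $I-\beta^2\mathcal A_0^{-1}$. Also, the scalar resolvent is $O(1)$ from $H_0$ to $H_1$; the $1/|\beta|$ gain is only into $H_0$.
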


\begin{remark} Several remarks are in order:
\begin{enumerate}

\item[{\rm (i)}]
Theorem \ref{th1.1} can be interpreted as follows:  under suitable assumptions, the uniform stability of the scalar equation \eqref{1.6} implies the polynomial stability of the coupled system \eqref{1.3}.
This result provides a simple and effective approach to a problem that is otherwise difficult-namely, establishing the polynomial stability of a system consisting of $N$ coupled partial differential equations.

\item[{\rm (ii)}]
An important feature of the Kalman's rank condition \eqref{1.4} is that the rank of $D$ can be very small (see Lemma \ref{th2.2}).
Even if only a small subset of equations is directly damped, the coupled system \eqref{1.3} still enjoys a polynomial decay of the total energy.
Moreover, this decay rate is independent of the number $N$ of equations, depending instead on the regularity properties of the damping operator.
This is a typical characteristic of polynomial stability (see \cite{Hao, Rao}).

{ \Red \item[{\rm (iii)}]
As demonstrated in \cite[Theorem 8.11]{book}, merely satisfying Kalman rank condition  \eqref{1.4} is insufficient to guarantee the uniqueness of \eqref{3.7}.
If $A$ and $D$ commute,  Kalman rank condition yields $\rank(D) = N$, and consequently, by Holmgren's uniqueness theorem, the uniqueness of \eqref{3.7} follows. However, this is of little significance, as we desire $\rank(D)$ to take a small value. Instead of $AD - DA=0$, we require that $\|AD - DA\|$ is sufficiently small. In this case, there exist matrices $\widehat A$ and $\widehat D$ that  commute and are respectively close to $A$ and $D$  and the matrices $A$ and $D$ are termed "almost  commutative" \cite{Rosen}.  Furthermore, the rank of $D$ in the Kalman rank condition \eqref{1.4} can be very small, which is what we hope for. Therefore, the smallness of $\|AD-DA\|$ appears to be a natural and appropriate complement to  Kalman rank condition \eqref{1.4} to   guarantee the uniqueness of \eqref{3.7}.}

\item[{\rm (iv)}]
The class of damping operators $g \in C(H_1, Z)$ considered here covers a broad range of mechanisms, including viscous damping, Kelvin-Voigt damping, boundary damping, and type~III thermoelastic damping.

\item[{\rm (v)}]
As mentioned in Section~1, since the introduction of the concept of indirect stabilization by Russell \cite{russell1993}, a substantial body of work has been devoted to the polynomial stability of indirectly damped systems.
However, relatively few studies have addressed such problems for systems consisting of $N$ PDEs with local or boundary damping.
Furthermore, our framework can also be applied to analyze  the long time behavior of the coupled system of $N$ thermoelastic  equations (\ref{th-3}).

\item[{\rm (vi)}] As will be shown in Section 5, the optimality of the decay rate in \eqref{1.7} can be established for several special cases, including the weakly coupled wave system with viscous, viscoelastic, or boundary damping in one-dimention.
\end{enumerate}
\end{remark}

To prove Theorem \ref{th1.1}, we shall use the following result, which  characterizes the polynomial energy decay rate of a $C_0$-semigroup of contractions in terms of the growth of the resolvent of its infinitesimal generator on the imaginary axis.

\medskip

\begin{lem}\label{th3.3}  (\cite{borichev2010, raoliu2005})
Let $\mathcal  A  $ generate a bounded $C_0$-semigroup $e^{t\mathcal A}$ on   Hilbert space  ${\mathcal H } $.
Assume that
\begin{equation}
    \label{3.13}
 i\,\mathbb{R}\subset \rho(\mathcal A).
   \end{equation}
Then the semigroup $e^{t\mathcal A}$ decays polynomially at a rate of order $\displaystyle  \theta$, i.e.,
 there exists a constant $M>0$, independent of $U$, such that
 \begin{equation}
 \|e^{\mathcal  A t}U\|_{H} \leqslant\frac {M}{t^{\theta}}{\Red \|U\|_{\frak D(\mathcal  A)}},\;\; \forall \;t\geqslant  1,
 \end{equation}
if and only if
   \begin{equation}
   \label{3.14}
\varlimsup\limits_{\lambda\in \mathbb{R},|\lambda|\to+ \infty}|\lambda|^{-{1/\theta}} \|(i\,\lambda I-\mathcal A)^{-1}\|< +\infty.
 \end{equation}
 \end{lem}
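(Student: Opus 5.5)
Since Lemma \ref{th3.3} is quoted from \cite{borichev2010, raoliu2005}, I would reproduce the Borichev--Tomilov argument rather than build something new. Set $\alpha=1/\theta$. Because $0\in\rho(\mathcal A)$, the norm of $\frak D(\mathcal A)$ is equivalent to $\|\mathcal A U\|$. Hence the decay estimate is equivalent to
$$\|e^{t\mathcal A}\mathcal A^{-1}\|=O(t^{-\theta}).$$
By the moment inequality for fractional powers of the sectorial operator $-\mathcal A$, this is in turn equivalent to
$$\|e^{t\mathcal A}(-\mathcal A)^{-\alpha}\|=O(t^{-1}).$$
I would prove both directions of the lemma in this second form.

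\emph{Necessity.} Assume the decay and fix $s\in\mathbb R$. I would write the resolvent identity
$$(is-\mathcal A)^{-1}(-\mathcal A)^{-\alpha}=\int_0^\tau e^{-ist}e^{t\mathcal A}(-\mathcal A)^{-\alpha}\,dt+e^{-is\tau}(is-\mathcal A)^{-1}e^{\tau\mathcal A}(-\mathcal A)^{-\alpha}.$$
Combining this with the bound $\|(is-\mathcal A)^{-1}\mathcal A^{-1}\|\lesssim \|(is-\mathcal A)^{-1}\|/|s|+1/|s|$ and choosing $\tau$ proportional to a suitable power of $\|(is-\mathcal A)^{-1}\|$, the tail term can be absorbed. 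This yields $\|(is-\mathcal A)^{-1}\|\lesssim |s|^{\alpha}$ for large $|s|$, which is \eqref{3.14}. This is the Batty--Duyckaerts style argument, and I expect it to be routine.

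\emph{Sufficiency.} This is the main obstacle, and it is where the Hilbert space structure is essential. From \eqref{3.14} and the identity
$$R(is)(-\mathcal A)^{-\alpha}=(-\mathcal A)^{-\alpha}R(is),$$
together with interpolation (for example $\|R(is)\mathcal A^{-1}\|\lesssim |s|^{\alpha-1}$ combined with the moment inequality), I would first show that
$$\sup_{s}\|R(is)(-\mathcal A)^{-\alpha}\|<\infty,$$
and that this bound persists on a strip $0\le {\rm Re}\,\lambda\le\varepsilon$ by a Neumann series. I would then apply Plancherel's theorem to $t\mapsto e^{-\varepsilon t}e^{t\mathcal A}x$ and to its adjoint counterpart. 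The Fourier transform of $t\,e^{t\mathcal A}y$ is $R(is)^2y$. Splitting $R(is)^2(-\mathcal A)^{-\alpha}$ as the product $R(is)(-\mathcal A)^{-\alpha/2}$ times $R(is)(-\mathcal A)^{-\alpha/2}$, and using Cauchy--Schwarz in $L^2(\mathbb R)$, I would aim to obtain
$$|\langle t\,e^{t\mathcal A}(-\mathcal A)^{-\alpha}x,y\rangle|\le C\|x\|\|y\|.$$
The delicate point is that the $L^2$ bounds on each factor must be derived from the boundedness of the semigroup and the resolvent bound alone. I would try to get them through the identity $R(is)=\int_0^\infty e^{-ist}e^{t\mathcal A}\,dt$ combined with a uniform estimate of the form
$$\int_{\mathbb R}\|R(\varepsilon+is)(-\mathcal A)^{-\alpha/2}x\|^2\,ds\lesssim\|x\|^2.$$
If that estimate fails, I would fall back on the original Borichev--Tomilov lemma, which states that $\|R(is)(-\mathcal A)^{-\alpha}\|$ bounded implies $\|e^{t\mathcal A}(-\mathcal A)^{-\alpha}\|=O(1/t)$. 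The conclusion then follows by returning from $(-\mathcal A)^{-\alpha}$ to $\mathcal A^{-1}$ via the moment inequality.
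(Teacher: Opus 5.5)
The paper gives no proof of this lemma; it quotes Borichev--Tomilov. Only the sufficiency direction (resolvent bound $\Rightarrow$ decay) is used in the proof of Theorem~\ref{th1.1}.

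Write $R(\lambda)=(\lambda-\mathcal A)^{-1}$, $K=\sup_{t\geqslant0}\|e^{t\mathcal A}\|$, $\alpha=1/\theta$.

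\emph{What is fine.} Your reduction to $\|e^{t\mathcal A}(-\mathcal A)^{-\alpha}\|=O(t^{-1})$ is sound. So is your first step, $C_\alpha:=\sup_{\mathrm{Re}\,\lambda>0}\|R(\lambda)(-\mathcal A)^{-\alpha}\|<\infty$, with one correction: a Neumann series does not give a fixed strip. It only covers $0<\mathrm{Re}\,\lambda\leqslant 1/(2\|R(i\,\mathrm{Im}\,\lambda)\|)$, and beyond that you need $\|R(\lambda)\|\leqslant K/\mathrm{Re}\,\lambda$ together with the same interpolation.

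\emph{The gap.} It is the step you yourself flag as delicate. Your sufficiency argument rests on the uniform bound $\int_{\mathbb R}\|R(\varepsilon+is)(-\mathcal A)^{-\alpha/2}x\|^2\,ds\lesssim\|x\|^2$, which you do not prove. It does not follow from what you have.
\begin{itemize}
\item The formula $R(is)=\int_0^\infty e^{-ist}e^{t\mathcal A}dt$ is not available on the imaginary axis for a merely bounded semigroup.
\item The natural mechanism combines $R(\varepsilon+is)=R(1+is)+(1-\varepsilon)R(\varepsilon+is)R(1+is)$ with $\int_{\mathbb R}\|R(1+is)x\|^2ds\leqslant\pi K^2\|x\|^2$. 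This proves such a bound for $(-\mathcal A)^{-\gamma}$ only if $\sup_{\mathrm{Re}\,\lambda>0}\|R(\lambda)(-\mathcal A)^{-\gamma}\|<\infty$, that is, for $\gamma=\alpha$.
\item For $\gamma=\alpha/2$ this norm grows like $|s|^{\alpha/2}$, already for a normal $\mathcal A$ with eigenvalues $-b_n^{-\alpha}+ib_n$.
\item By Plancherel, your estimate is the square-function bound $\int_0^\infty\|e^{t\mathcal A}(-\mathcal A)^{-\alpha/2}x\|^2dt\lesssim\|x\|^2$. This is not implied even by the conclusion you are after, $\|e^{t\mathcal A}(-\mathcal A)^{-\alpha/2}\|=O(t^{-1/2})$, since $t^{-1}$ is not integrable.
\item With the $L^2$ bounds you actually have, your $R(\lambda)^2$ splitting gives only half the rate. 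Put all of $(-\mathcal A)^{-\alpha}$ on one factor, take $\varepsilon^{-1/2}$ from the other, and set $\varepsilon=1/t$: you get $\|e^{t\mathcal A}(-\mathcal A)^{-\alpha}\|=O(t^{-1/2})$.
\item Your fallback to ``the original Borichev--Tomilov lemma'' is circular.
\end{itemize}

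\emph{The missing idea.} Use one more power of the resolvent, so that the fractional power sits on a uniformly bounded factor and no fractional $L^2$ estimate is needed. Since $t^2e^{t\mathcal A}x$ has Laplace transform $2R(\lambda)^3x$, for $\varepsilon>0$ and $\lambda=\varepsilon+is$ we have
\[
t^2\langle e^{t\mathcal A}(-\mathcal A)^{-\alpha}x,y\rangle=\frac{e^{\varepsilon t}}{\pi}\int_{\mathbb R}e^{ist}\big\langle R(\lambda)(-\mathcal A)^{-\alpha}R(\lambda)x,\,R(\lambda)^*y\big\rangle\,ds ,
\]
with the integral converging absolutely. The middle factor has norm at most $C_\alpha$. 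Plancherel and the boundedness of $e^{t\mathcal A}$ and $e^{t\mathcal A^*}$ give $\int_{\mathbb R}\|R(\varepsilon+is)x\|^2ds\leqslant\pi K^2\|x\|^2/\varepsilon$, and likewise for $R(\lambda)^*y$. Cauchy--Schwarz then yields $t^2|\langle e^{t\mathcal A}(-\mathcal A)^{-\alpha}x,y\rangle|\leqslant e^{\varepsilon t}C_\alpha K^2\varepsilon^{-1}\|x\|\|y\|$. Taking $\varepsilon=1/t$ gives $\|e^{t\mathcal A}(-\mathcal A)^{-\alpha}\|\leqslant eC_\alpha K^2/t$.

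\emph{Necessity.} Your sketch does not close as written. The bound you quote for $\|R(is)\mathcal A^{-1}\|$ points the wrong way for absorbing the tail. Estimating the integral term through the decay of $e^{t\mathcal A}(-\mathcal A)^{-\alpha}$ costs a factor $\log\tau$. Instead, keep the plain semigroup in the integral and put the decay in the tail:
\[
R(is)=\int_0^\tau e^{-ist}e^{t\mathcal A}dt+e^{-is\tau}e^{\tau\mathcal A}\mathcal A^{-1}(isR(is)-I).
\]
This gives $\|R(is)\|\leqslant K\tau+C\tau^{-\theta}(|s|\,\|R(is)\|+1)$; now choose $\tau=(2C|s|)^{1/\theta}$.
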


 The next result is frequently used to determine the optimal decay rate for a $C_0$-semigroup.

\medskip

  \begin{lem}\label{th3.4}  (\cite{RL}) Let
 $(\beta_n)_{n\geqslant 1}$ be a sequence of eigenvalues of $\mathcal A$ such that
\begin{equation}
\label{eq-opti}
Re (\beta_n)\sim - \frac{1}{|Im (\beta_n)|^{1/\theta}},\quad \theta>0.\end{equation}
Then,  the decay rate of  the semigroup $e^{t\mathcal A}$  cannot exceed
 $\mathcal O(t^{-\theta})$.
 \end{lem}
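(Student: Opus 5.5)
The plan is to argue by contradiction and reduce everything to the resolvent characterization of Lemma~\ref{th3.3}: a polynomial decay rate is equivalent to a growth bound on $\|(i\lambda I-\mathcal A)^{-1}\|$ along the imaginary axis, and an eigenvalue lying very close to that axis forces the resolvent to be large nearby. Throughout, $e^{t\mathcal A}$ is the bounded $C_0$-semigroup of Lemma~\ref{th3.3}.

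\emph{Setup of the contradiction.} Suppose that for some $\theta'>\theta$ we have
$$\|e^{t\mathcal A}U\|_{\mathcal H}\leqslant \frac{M}{t^{\theta'}}\|U\|_{\frak D(\mathcal A)}\quad\text{for all } t\geqslant 1 .$$
Such decay to zero of every orbit issued from $\frak D(\mathcal A)$ rules out purely imaginary eigenvalues, and (by the standard argument underlying Lemma~\ref{th3.3}) gives $i\mathbb R\subset\rho(\mathcal A)$. Hence Lemma~\ref{th3.3} applies with exponent $\theta'$, and yields constants $C>0$ and $\lambda_0>0$ such that
$$\|(i\lambda I-\mathcal A)^{-1}\|\leqslant C|\lambda|^{1/\theta'}\quad\text{for all real } |\lambda|\geqslant \lambda_0 .$$

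\emph{Lower bound from the eigenvalues.} Set $\lambda_n=\mathrm{Im}(\beta_n)$. The hypothesis \eqref{eq-opti} is meaningful only along a sequence with $|\lambda_n|\to\infty$; I will take this as implicit, passing to a subsequence if necessary. Then $\mathrm{Re}(\beta_n)\to 0$. I will use the elementary estimate, valid for any closed operator,
$$\|(zI-\mathcal A)^{-1}\|\geqslant \frac{1}{\mathrm{dist}(z,\sigma(\mathcal A))},\qquad z\in\rho(\mathcal A).$$
To prove it directly, take a unit eigenvector $\Phi_n$ with $\mathcal A\Phi_n=\beta_n\Phi_n$. Then
$$(i\lambda_nI-\mathcal A)\Phi_n=(i\lambda_n-\beta_n)\Phi_n=-\mathrm{Re}(\beta_n)\,\Phi_n,$$
and therefore
$$\|(i\lambda_n I-\mathcal A)^{-1}\|\geqslant \frac{1}{|\mathrm{Re}(\beta_n)|}\geqslant c\,|\lambda_n|^{1/\theta},$$
where the last inequality uses the assumption \eqref{eq-opti}.

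\emph{Conclusion.} Combining the two bounds gives
$$c\,|\lambda_n|^{1/\theta}\leqslant C|\lambda_n|^{1/\theta'}\quad\text{for all large } n .$$
Since $1/\theta>1/\theta'$ and $|\lambda_n|\to\infty$, this is impossible. Hence no decay rate faster than $\mathcal O(t^{-\theta})$ can hold.

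The only delicate point is the preliminary step: justifying $i\mathbb R\subset\rho(\mathcal A)$ from the assumed decay, and ensuring $|\mathrm{Im}(\beta_n)|\to\infty$, so that Lemma~\ref{th3.3} may legitimately be invoked.
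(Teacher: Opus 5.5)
The paper gives no proof of this lemma; it is quoted from the cited reference. Your argument is correct. Testing $i\lambda_n I-\mathcal A$ on a unit eigenvector gives $\|(i\lambda_n I-\mathcal A)^{-1}\|\geqslant 1/|\mathrm{Re}(\beta_n)|\geqslant c|\lambda_n|^{1/\theta}$. This is incompatible with the bound $C|\lambda|^{1/\theta'}$ that the ``decay $\Rightarrow$ resolvent bound'' half of Lemma~\ref{th3.3} gives from a rate $t^{-\theta'}$ with $\theta'>\theta$.

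Two remarks on the points you flag. First, $i\mathbb R\subset\rho(\mathcal A)$ does not follow from decay of individual orbits. The left shift on $L^2(0,\infty)$ is strongly stable, yet the whole imaginary axis lies in its spectrum. What gives the inclusion is the \emph{uniform} bound in the $\mathfrak D(\mathcal A)$-norm. Since the semigroup is bounded, any $i\lambda\in\sigma(\mathcal A)$ lies on $\partial\sigma(\mathcal A)$, so it is an approximate eigenvalue, with approximate eigenvectors $x_k$ bounded in $\mathfrak D(\mathcal A)$. Then $e^{t\mathcal A}x_k-e^{i\lambda t}x_k\to 0$ for each fixed $t$, which contradicts $\|e^{t\mathcal A}x_k\|\leqslant Mt^{-\theta'}\|x_k\|_{\mathfrak D(\mathcal A)}$ for $t$ large. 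In the paper's setting this is anyway Proposition~\ref{th3.2}. Second, reading $|\mathrm{Im}(\beta_n)|\to\infty$ into the hypothesis is not optional. The constant sequence $\beta_n=-1+i$ satisfies \eqref{eq-opti} literally and can occur in exponentially stable systems.

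A more elementary route avoids Lemma~\ref{th3.3} entirely. Apply the assumed estimate to the unit eigenvector $\Phi_n$ at time $t_n=|\mathrm{Im}(\beta_n)|^{1/\theta}$. On one side, $\|e^{t_n\mathcal A}\Phi_n\|=e^{t_n\mathrm{Re}(\beta_n)}\to e^{-1}$. On the other side, $Mt_n^{-\theta'}\|\Phi_n\|_{\mathfrak D(\mathcal A)}\leqslant Mt_n^{-\theta'}(1+|\beta_n|)=O(|\mathrm{Im}(\beta_n)|^{1-\theta'/\theta})\to 0$, a contradiction. This needs neither $i\mathbb R\subset\rho(\mathcal A)$ nor the Hilbert-space structure. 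Your resolvent version costs the preliminary spectral step. In exchange, it shows that the resolvent growth allowed by Lemma~\ref{th3.3} is actually attained along $i\,\mathrm{Im}(\beta_n)$, which is exactly how the paper phrases optimality in Section~\ref{5}.
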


\medskip
The Kalman's rank condition \eqref{1.4} characterizes the coupling structure in system \eqref{1.3} and is of central importance  in the analysis of control and synchronization for distributed parameter systems (see, for instance, \cite{book} and the references therein).
For the convenience of the reader, we briefly review key aspects of the Kalman's rank condition below and refer to \cite{book} for a more comprehensive treatment and applications.
We first recall a fundamental property of the Kalman's rank condition.

\medskip
\begin{lem} (\cite[Lemma 2.5]{Siam2016})  \label{th2.0}
Let $A, D$ be symmetrical matrices  of order $N$.  Then  Kalman's  rank condition
  \begin{equation} \rank(D, AD,\ldots, A^{N-1}D) =N-p\label{2.0} \end{equation}
holds if and only if $p$ is the dimension of the largest    $A$-invariant subspace contained in $\Ker(D)$.
\end {lem}

{\Red Since $A$ is symmetric and positive semidefinite,   there exists an orthogonal matrix $P$ such that $PAP^{-1}$ is diagonal. Under the change of variables $\widetilde U = PU$, system \eqref{1.3} preserves its structure, and Kalman's condition \eqref{1.4} remains unchanged, since it is invariant under orthogonal transformations. Therefore, without loss of generality, we may assume that $A$ is diagonal of the form}
\begin{equation}\label{2.1} A = diag(\overbrace{\lambda_1,\ldots,
\lambda_1}^{\sigma_1}, \cdots\ldots, \overbrace{\lambda_m,\ldots,
\lambda_m}^{\sigma_m}),\end{equation}
where each
$\lambda_l\geqslant 0$ is an eigenvalue of $A$  with  multiplicity $\sigma_l \ (l= 1, \ldots, m)$.

Define
\begin{equation}\notag   \mu_0= 0,  \quad \mu_l = \mu_{l-1}+ \sigma_l, \quad l = 1,\ldots, m\end{equation}
and write the matrix $D$ as
$$D = (d_1,\ldots,  d_{\mu_1}, \ldots \ldots,
d_{\mu_{m-1}+1},\ldots,  d_{\mu_m}),$$
where $d_i\in \mathbb R^N$ denotes the $i$-th column  vector of $D$.

\medskip

The next result relates the structure of $D$ to that of $A$.
\medskip

\begin{lem}  (\cite{book}) \label{th2.1}
Kalman's rank condition \eqref{1.4}  holds   if and only if for each $1\leqslant l\leqslant m$, the set of column vectors  $\{d_{\mu_{l-1}+1},\ldots, d_{\mu_l}\}$ is linearly independent.
\end{lem}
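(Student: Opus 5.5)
We will deduce the statement from Lemma \ref{th2.0} applied with $p=0$: Kalman's rank condition \eqref{1.4} holds if and only if the only $A$-invariant subspace contained in $\Ker(D)$ is $\{0\}$. The plan is to turn this invariant-subspace condition into a statement about eigenvectors of $A$, and then read it off block by block using the diagonal form \eqref{2.1}.

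\textbf{Step 1: reduce to eigenvectors.} We will show that $\Ker(D)$ contains a nonzero $A$-invariant subspace if and only if it contains an eigenvector of $A$. One direction is immediate: if $x\in\Ker(D)$ is an eigenvector of $A$, then $\mathrm{span}\{x\}$ is a nonzero $A$-invariant subspace of $\Ker(D)$. For the converse, let $V\subseteq\Ker(D)$ be a nonzero $A$-invariant subspace. Since $A$ is real symmetric, the restriction $A|_V$ is a symmetric operator on $V$ for the induced inner product. Hence $A|_V$ has a real eigenvalue $\lambda_l$ with an eigenvector $x\in V$, and this $x$ is an eigenvector of $A$ lying in $\Ker(D)$.

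\textbf{Step 2: describe the eigenspaces.} With $A$ in the form \eqref{2.1}, the eigenspace of $\lambda_l$ is
$$E_l=\mathrm{span}\{e_{\mu_{l-1}+1},\ldots,e_{\mu_l}\},$$
because the $\lambda_l$ are distinct. Any $x\in E_l$ has the form $x=\sum_{i=\mu_{l-1}+1}^{\mu_l}x_ie_i$, so
$$Dx=\sum_{i=\mu_{l-1}+1}^{\mu_l}x_i\,d_i .$$
Therefore $\Ker(D)\cap E_l\neq\{0\}$ holds exactly when there is a nontrivial vanishing linear combination of $d_{\mu_{l-1}+1},\ldots,d_{\mu_l}$. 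In other words, it holds exactly when these columns are linearly dependent.

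\textbf{Step 3: conclude.} Combining Steps 1 and 2 with Lemma \ref{th2.0}, the rank condition \eqref{1.4} holds if and only if $\Ker(D)\cap E_l=\{0\}$ for every $1\leqslant l\leqslant m$. By Step 2, this is the same as linear independence of each block of columns $\{d_{\mu_{l-1}+1},\ldots,d_{\mu_l}\}$.

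The only point needing care is Step 1. It relies on the symmetry of $A$, which guarantees that $A|_V$ is diagonalizable with real spectrum, so that every nonzero invariant subspace contains a genuine eigenvector. The symmetry of $D$ is not used in this argument.
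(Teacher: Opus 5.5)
Your proof is correct and complete. The paper gives no proof of this lemma; it is quoted from \cite{book}, so there is no in-paper argument to compare against. Your route is the standard Hautus-type argument: Lemma~\ref{th2.0} with $p=0$, then reducing nonzero $A$-invariant subspaces of $\Ker(D)$ to eigenvectors of $A$, then reading $\Ker(D)\cap E_l$ off block by block. It is also the same reduction the paper itself uses later, in the proof of Theorem~\ref{th-thermo}. There it takes an eigenvector of $\widetilde A$ lying in $\Ker(\widetilde D)$ and plays it against Lemma~\ref{th2.0} with $p=0$, silently relying on your Step~1.

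One correction concerns your closing remark that the symmetry of $D$ is not used. It is used, but it is hidden inside Lemma~\ref{th2.0}. In general, the orthogonal complement of the range of $(D, AD,\ldots,A^{N-1}D)$ is the largest $A^T$-invariant subspace of $\Ker(D^T)$, not of $\Ker(D)$. So for nonsymmetric $D$ the correct criterion concerns the rows of $D$, not its columns. For example, take $A=\mathrm{diag}(1,2)$ and $D=\begin{pmatrix}1&1\\0&0\end{pmatrix}$. Both singleton column blocks $\{d_1\}$ and $\{d_2\}$ are linearly independent, since the columns are nonzero, yet $\rank(D,AD)=1$. In the present setting this causes no problem. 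The orthogonal change of variables that diagonalizes $A$ replaces $D$ by $PDP^T$, which is still symmetric, so your appeal to Lemma~\ref{th2.0} is legitimate. You should simply say that symmetry of $D$ enters there, rather than claim it is unused.
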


The following result further clarifies the dependence of $\rank(D) $ on the  structure of
matrix $A$ under the Kalman's rank condition \eqref{1.4}.
In particular, if  $A$ has distinct eigenvalues,  it is possible to choose
 $D$ such that $\rank(D)=1$.
\medskip

\begin{lem}  (\cite{book})\label {th2.2}
Assume that in \eqref{2.1} the multiplicities satisfy
$$\sigma_1\geqslant\ldots   \geqslant\sigma_m>0.$$
Then there exists a symmetrical positive semi-definite {\Red matrix} $D$ such that  $\rank (D)=\sigma_1 $  and   the Kalman's rank condition \eqref{1.4} holds.
\end{lem}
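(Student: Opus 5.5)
We prove Lemma \ref{th2.2} by building $D$ block by block from an orthonormal eigenbasis of $A$, so that Lemma \ref{th2.1} applies directly. Since $A$ is given in the diagonal form \eqref{2.1}, the canonical vectors $e_{\mu_{l-1}+1},\ldots,e_{\mu_l}$ span the eigenspace $E_l$ of $\lambda_l$, and $\dim E_l=\sigma_l\leqslant\sigma_1$. We look for $D$ of the form
$$D=\sum_{k=1}^{\sigma_1} w_k w_k^T ,\qquad w_k=\sum_{l:\,\sigma_l\geqslant k} e_{\mu_{l-1}+k}\in\mathbb R^N .$$
Thus $w_k$ collects the $k$-th basis vector of every eigenspace whose dimension is at least $k$. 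The vectors $w_1,\ldots,w_{\sigma_1}$ have pairwise disjoint supports and are nonzero; indeed $w_k$ contains $e_{k}$, because $\sigma_1\geqslant k$. Hence they are orthogonal and linearly independent.

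It follows that $D$ is symmetric and positive semi-definite, and its range is $\mathrm{span}\{w_1,\ldots,w_{\sigma_1}\}$, so $\rank(D)=\sigma_1$. This settles the first two claims.

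It remains to check the criterion of Lemma \ref{th2.1}. For a column index $i=\mu_{l-1}+k$ with $1\leqslant k\leqslant\sigma_l$, the $i$-th column of $D$ is
$$d_i=De_i=\sum_{j} w_j (w_j^T e_i)=w_k ,$$
because $e_i$ occurs in $w_k$ and in no other $w_j$. Hence the family $\{d_{\mu_{l-1}+1},\ldots,d_{\mu_l}\}$ equals $\{w_1,\ldots,w_{\sigma_l}\}$. This is a subfamily of an orthogonal family of nonzero vectors, so it is linearly independent. By Lemma \ref{th2.1}, the Kalman rank condition \eqref{1.4} holds.

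The only step that needs care is the column identification $d_i=w_k$. It relies on each basis vector $e_i$ appearing in exactly one $w_k$, which is guaranteed by the disjoint-support construction. The ordering hypothesis $\sigma_1\geqslant\cdots\geqslant\sigma_m$ is used only to make every $w_k$, $k\leqslant\sigma_1$, nonzero. This ensures the rank is exactly $\sigma_1$.

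Finally, since the rank condition is invariant under the orthogonal change of variables discussed before \eqref{2.1}, the construction transfers to a general symmetric $A$ by setting $D\mapsto P^{T}DP$. This keeps $D$ symmetric and positive semi-definite with the same rank.
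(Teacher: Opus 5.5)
Your proof is correct. Each index $i=\mu_{l-1}+k$ (with $1\leqslant k\leqslant\sigma_l$) lies in the support of exactly one $w_k$, so $D=WW^T$ with $W=(w_1,\ldots,w_{\sigma_1})$ of full column rank. This gives symmetry, positive semi-definiteness and $\rank(D)=\sigma_1$ at once. The identification $d_{\mu_{l-1}+k}=w_k$ then reduces the criterion of Lemma \ref{th2.1} to the linear independence of $w_1,\ldots,w_{\sigma_l}$.

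The paper does not prove this lemma; it only cites \cite{book}. So there is no in-paper argument to compare with, and yours is a self-contained verification that rests only on Lemma \ref{th2.1}.

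For comparison, a less explicit route uses the kernel characterization (Lemma \ref{th2.0} with $p=0$). Since $A$ is symmetric, \eqref{1.4} holds if and only if $\Ker(D)$ contains no eigenvector of $A$. It therefore suffices to take an $(N-\sigma_1)$-dimensional subspace $K$ with $K\cap E_l=\{0\}$ for every $l$, and let $D$ be the orthogonal projection onto $K^\perp$. Such a $K$ exists because $\dim K+\dim E_l\leqslant N$ and a generic subspace is transversal to finitely many given ones. That argument needs a genericity step, whereas yours produces an explicit $0$--$1$ matrix.

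You could also add the converse remark that $\sigma_1$ is the smallest rank compatible with \eqref{1.4}. By Lemma \ref{th2.1}, the first-block columns $d_1,\ldots,d_{\sigma_1}$ must be linearly independent.
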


\medskip

Finally, we recall a basic property related to the  Kalman's rank condition.
\medskip

\begin{lem}  (\cite{book})  \label {th2.4}
Let
\begin{align}\label{2.2}D_l  = (d_{\mu_{l-1}+1},\ldots, d_{\mu_l}), \quad D= (D_1,  \ldots, D_m),
\end{align}
and
\begin{align}
\label{2.3}U_l = \begin{pmatrix}u^{(\mu_{l-1}+1)}\\\vdots\\ u^{(\mu_l)}\end{pmatrix}, \quad U=
\begin{pmatrix}U_1\\ \vdots\\ U_m\end{pmatrix}.\end{align}
Assume that the Kalman's rank condition \eqref{1.4}  holds. Then
\begin{equation}c\|U\|^2_{\mathbb R^N}\leqslant \|DU\|^2_{\mathbb R^N}-
\sum_{k\not =l}\langle D_lU_l, D_kU_k\rangle_{\mathbb R^N}\label {2.5}
\end{equation}
where  $\langle \cdot, \cdot \rangle_{\mathbb R^N}$  denotes the standard inner product in
  $\mathbb R^N $ and $c=\min\{\sigma_l\;:\; l=1,\cdots,m\}$.
\end{lem}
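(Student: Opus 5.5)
The plan is to notice that the right-hand side of \eqref{2.5} is just a sum of block norms, and then to get coercivity of each block from Lemma \ref{th2.1}. No analysis is needed beyond finite-dimensional linear algebra.

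\textbf{Step 1: expand $DU$ blockwise.} With the block decompositions \eqref{2.2}--\eqref{2.3}, matrix multiplication gives $DU=\sum_{l=1}^m D_lU_l$. Expanding the square,
\begin{equation*}
\|DU\|^2_{\mathbb R^N}=\sum_{l=1}^m\|D_lU_l\|^2_{\mathbb R^N}+\sum_{k\neq l}\langle D_lU_l, D_kU_k\rangle_{\mathbb R^N}.
\end{equation*}
Hence the right-hand side of \eqref{2.5} equals exactly $\sum_{l=1}^m\|D_lU_l\|^2_{\mathbb R^N}$. The cross terms cancel identically, so it suffices to show $\sum_l\|D_lU_l\|^2\geqslant c\sum_l\|U_l\|^2=c\|U\|^2$.

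\textbf{Step 2: coercivity of each block.} This is where the Kalman rank condition \eqref{1.4} enters. We may assume $A$ has the diagonal form \eqref{2.1}, which was justified earlier by an orthogonal change of variables. By Lemma \ref{th2.1}, for each $l$ the $\sigma_l$ columns $d_{\mu_{l-1}+1},\ldots,d_{\mu_l}$ of the $N\times\sigma_l$ matrix $D_l$ are linearly independent. Therefore $D_l$ is injective on $\mathbb R^{\sigma_l}$, and the Gram matrix $D_l^TD_l$ is symmetric positive definite.

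Let $c_l>0$ be the smallest eigenvalue of $D_l^TD_l$. Then
\begin{equation*}
\|D_lU_l\|^2_{\mathbb R^N}=\langle D_l^TD_lU_l,U_l\rangle_{\mathbb R^{\sigma_l}}\geqslant c_l\|U_l\|^2_{\mathbb R^{\sigma_l}}.
\end{equation*}
Equivalently, $\|D_lU_l\|^2\geqslant c_l\|U_l\|^2$ follows from compactness of the unit sphere in $\mathbb R^{\sigma_l}$ together with injectivity of $D_l$.

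\textbf{Step 3: sum over blocks.} Summing over $l$ and using $\|U\|^2=\sum_l\|U_l\|^2$ gives \eqref{2.5} with $c=\min_l c_l>0$.

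The only delicate point is the value of the constant. The argument naturally produces $c=\min_l\lambda_{\min}(D_l^TD_l)$, which depends on $D$ and not merely on the multiplicities $\sigma_l$. For the subsequent use in the proof of Theorem \ref{th1.1}, only the existence of some $c>0$ independent of $U$ matters. I would therefore state the constant in this form and regard the expression $\min\{\sigma_l\}$ in the statement as shorthand for a positive constant determined by the blocks $D_l$; it cannot hold literally, since rescaling $D$ scales the right-hand side but not the left. If a normalization were intended, such as $D_l^TD_l\geqslant \sigma_l I$, it would have to be imposed separately.

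Because the proof reduces entirely to Lemma \ref{th2.1} plus the block expansion, I expect no real obstacle beyond keeping this indexing bookkeeping straight.
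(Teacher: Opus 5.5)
Your proof is correct and follows the paper's own argument: you expand $\|DU\|^2$ blockwise so that the right-hand side of \eqref{2.5} reduces to $\sum_l\|D_lU_l\|^2$, and you use Lemma \ref{th2.1} to get positive definiteness of each Gram matrix $D_l^TD_l$. Your caveat about the constant is well founded: the paper's proof likewise only produces some $c>0$ (in effect $\min_l\lambda_{\min}(D_l^TD_l)$), and replacing $D$ by $\varepsilon D$ with $\varepsilon$ small shows that $c=\min_l\sigma_l$ cannot hold literally.
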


\begin{proof}
From
\begin{equation}DU = \sum_{l=1}^mD_lU_l,
\label {2.4}\end{equation}
we obtain
\begin{equation}\|DU\|_{\mathbb R^N}^2  =\sum_{l=1}^m\|D_lU_l\|_{\mathbb R^N}^2 +
\sum_{k\not =l}\langle D_lU_l, D_kU_k\rangle_{\mathbb R^N}.\label{2.6}\end{equation}
By Lemma \ref{th2.1},  the vectors $d_{\mu_{l-1}+1},\ldots, d_{\mu_l}$ are linearly independent.
Hence, the  matrix $D_l^TD_l$  is symmetric   positive  definite. Thus, there exists $c>0$ such that
$$\sum_{l=1}^m\|D_lU_l\|_{\mathbb R^N}^2=\sum_{l=1}^m\langle U_l,  D_l^TD_lU_l\rangle_{\mathbb R^{\sigma_l}}
\geqslant
c\sum_{l=1}^m\|U_l\|^2_{\mathbb R^{\sigma_l}}= c \|U\|^2_{\mathbb R^N}.$$
Combining this with \eqref{2.6} yields \eqref{2.5}.
\end{proof}

\bigskip

  \section{Proof of the main result} \label{3}
   In this section, we prove the polynomial stability of system \eqref{1.3}.
 We begin by defining  the energy space  $\mathcal H_1 \times  \mathcal H_0 $ equipped with the inner product
\begin{equation}
\label{norm}
\begin{array}{ll}
\langle \! \langle (U,V), (\widehat {U},\widehat {V})\rangle \! \rangle_{\mathcal H_1\times \mathcal  H_0}
= \langle \! \langle U, \widehat {U}\rangle \! \rangle_{\mathcal H_1} +  \langle \! \langle AU, \widehat {U}\rangle \! \rangle_{ \mathcal  H_0}
+ \langle \! \langle V, \widehat {V}\rangle \! \rangle_{\mathcal H_0},
\end{array}
\end{equation}
where $U =\big(u^{(1)}, \cdots, u^{(N)}\big)^T \in {\mathcal H_1}$ and $V =\big(v^{(1)}, \cdots, v^{(N)}\big)^T \in {\mathcal H_0},$  respectively,
$\widehat {U} =\big(\widehat {u}^{(1)}, \cdots, \widehat {u}^{(N)}\big)^T\in {\mathcal H_1}$ and  $
\widehat {V} =\big(\widehat {v}^{(1)}, \cdots, \widehat {v}^{(N)}\big)^T\in {\mathcal H_0}$.

\medskip
We first give the well-posedness of system \eqref{1.3}.
\medskip

\begin{proposition}  \label{th3.1}The operator $\mathcal A$ generates a $C_0$ semigroup of  contractions  on  the space $\mathcal H_1 \times \mathcal  H_0$. Moreover,  $0\in \rho(\mathcal A)$, and  if $g$  is compact from $H_1$ into  $Z$, then $\mathcal A^{-1}$ is compact on $\mathcal H_1 \times \mathcal  H_0$.\end{proposition}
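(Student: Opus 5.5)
We apply the Lumer--Phillips theorem on the energy space $\mathcal H_1\times\mathcal H_0$ with the inner product \eqref{norm}. Since $A$ is symmetric positive semi-definite, this inner product is equivalent to the standard one, so $\mathcal H_1\times\mathcal H_0$ is a Hilbert space.

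\emph{Dissipativity.} For $(U,V)\in\frak D(\mathcal A)$ we compute, using \eqref{1.1}, \eqref{1.11} and the symmetry of $A$,
\begin{align*}
\mathrm{Re}\,\langle\!\langle \mathcal A(U,V),(U,V)\rangle\!\rangle
&=\mathrm{Re}\big(\langle\!\langle V,U\rangle\!\rangle_{\mathcal H_1}+\langle\!\langle AV,U\rangle\!\rangle_{\mathcal H_0}
-\langle\!\langle \mathcal LU+AU+D\mathcal G^*\mathcal GV,V\rangle\!\rangle_{\mathcal H_{-1};\mathcal H_1}\big)\\
&=-\mathrm{Re}\,\langle\!\langle D\mathcal GV,\mathcal GV\rangle\!\rangle_{\mathcal Z}\leqslant 0 .
\end{align*}
The first line uses $V\in\mathcal H_1$, which is built into $\frak D(\mathcal A)$. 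The last inequality holds because $D$ acts on the vector index and is positive semi-definite.

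\emph{Invertibility.} Given $(F,G)\in\mathcal H_1\times\mathcal H_0$, the equation $\mathcal A(U,V)=(F,G)$ forces $V=F\in\mathcal H_1$. It then reduces to
\[
\mathcal LU+AU=-G-D\mathcal G^*\mathcal GF\in\mathcal H_{-1}.
\]
The bilinear form $\langle\!\langle U,W\rangle\!\rangle_{\mathcal H_1}+\langle\!\langle AU,W\rangle\!\rangle_{\mathcal H_0}$ is coercive on $\mathcal H_1$. By Lax--Milgram there is a unique $U\in\mathcal H_1$, depending continuously on $(F,G)$. By construction $\mathcal LU+AU+D\mathcal G^*\mathcal GV=-G\in\mathcal H_0$, so $(U,V)\in\frak D(\mathcal A)$. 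Hence $0\in\rho(\mathcal A)$ and $\mathcal A^{-1}$ is bounded. Since $\rho(\mathcal A)$ is open, $\lambda-\mathcal A$ is also onto for small $\lambda>0$. Together with dissipativity, this makes $\mathcal A$ maximal dissipative.

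\emph{Density of the domain.} In a Hilbert space a maximal dissipative operator is automatically densely defined. Lumer--Phillips therefore gives a $C_0$ contraction semigroup.

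\emph{Compactness.} We write
\[
\mathcal A^{-1}(F,G)=\big(U,F\big),\qquad U=-(\mathcal L+A)^{-1}\big(G+D\mathcal G^*\mathcal GF\big).
\]
The second component is the map $(F,G)\mapsto F$ into $\mathcal H_0$, which is compact because $H_1\subset H_0$ compactly. For the first component:
\begin{itemize}
\item The map $G\mapsto(\mathcal L+A)^{-1}G$ from $\mathcal H_0$ to $\mathcal H_1$ is compact. It is the adjoint of the compact embedding $\mathcal H_1\to\mathcal H_0$ composed with the isomorphism $(\mathcal L+A)^{-1}:\mathcal H_{-1}\to\mathcal H_1$.
\item The map $F\mapsto(\mathcal L+A)^{-1}D\mathcal G^*\mathcal GF$ is compact as soon as $g:H_1\to Z$ is compact. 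Indeed, $\mathcal G^*:\mathcal Z\to\mathcal H_{-1}$ is bounded and $(\mathcal L+A)^{-1}$ is an isomorphism, so the composition is compact.
\end{itemize}
The main obstacle is this last point, namely justifying that $\mathcal G^*$ is bounded from $\mathcal Z$ into $\mathcal H_{-1}$. This follows from \eqref{1.11} by duality, since $\|\mathcal G^*z\|_{\mathcal H_{-1}}\leqslant\|g\|\,\|z\|_{\mathcal Z}$.
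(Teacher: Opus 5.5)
Your proof is correct and follows the same route as the paper: the energy identity giving $\mathrm{Re}\,\langle\!\langle \mathcal A(U,V),(U,V)\rangle\!\rangle=-\langle\!\langle D\mathcal GV,\mathcal GV\rangle\!\rangle_{\mathcal Z}\leqslant 0$, Lax--Milgram applied to $\mathcal LU+AU=-G-D\mathcal G^*\mathcal GF$ to obtain $0\in\rho(\mathcal A)$, and compactness of $(F,G)\mapsto\big((\mathcal L+A)^{-1}(-D\mathcal G^*\mathcal GF-G),F\big)$. You also make explicit the steps the paper leaves implicit, and these are all sound: you pass from $0\in\rho(\mathcal A)$ to surjectivity of $\lambda-\mathcal A$ for small $\lambda>0$, you note that maximal dissipativity gives density of the domain, and you justify the compactness component by component.
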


\begin{proof}
For any $(U,  V)
\in \frak D(\mathcal A)$, using the norm defined in \eqref{norm} we compute
\begin{equation}
\begin{array}{ll}
&Re \langle \! \langle \mathcal A(U,  V), (U, V)\rangle \! \rangle _{\mathcal H_1\times \mathcal  H_0}
\\ \noalign{\medskip}  \displaystyle
=&  Re \big(\langle \! \langle V,U \rangle \! \rangle_{\mathcal H_1}  + \langle \! \langle AV,U \rangle \! \rangle_{\mathcal H_0}  +
\langle \! \langle -\mathcal  LU -AU -  D\mathcal G^*\mathcal GV ,V \rangle \! \rangle_{\mathcal H_0}\big)
 \\ \noalign{\medskip}  \displaystyle
 =&-  \langle \! \langle D\mathcal GV, \mathcal GV \rangle \! \rangle_{\mathcal Z}\leqslant 0.\end{array}
\label{3.5b}
\end{equation}
Hence $\mathcal A$ is dissipative.

Next,  for any  $(F,G) \in \mathcal H_1\times \mathcal  H_0$,   consider the equation $\mathcal A(U,  V) =(F,G)$, which  is equivalent to
\begin{equation}
\label{w1}
\begin{array}{l}
V = F\in \mathcal H_1,
\\ \noalign{\medskip}  \displaystyle
 -\mathcal  LU -AU -  D\mathcal G^*\mathcal GV= G\in \mathcal H_0.
\end{array}
\end{equation}
Taking the inner product of the second equation in \eqref{w1} with $U$ in $\mathcal H_0$ and using the first equation, we obtain
\begin{equation}
\label{w2}
\langle \! \langle \mathcal  LU +AU,U \rangle \! \rangle_{\mathcal H_{-1};\mathcal H_{1}} =- \langle \! \langle D\mathcal G^*  \mathcal GF+ G,U \rangle \! \rangle_{\mathcal H_{-1};\mathcal H_{1}}.
\end{equation}
This admits a unique solution $U\in \mathcal H_1$.   Together with \eqref{w1}, it follows that $(U, V) \in \frak D(\mathcal A) $ and therefore $0\in \rho(\mathcal A)$.

If  $g$  is compact from $H_1$ to  $Z$,  then the mapping
$$(F, G)\rightarrow (U, V)=(( \mathcal  L+A)^{-1} (-D\mathcal G^*\mathcal GF- G), F)$$
is compact on   $ \mathcal H_1\times \mathcal  H_0$, which implies  the compactness of $\mathcal A^{-1}$. \end{proof}

\medskip

 \begin{proposition}\label{th3.2} Suppose that conditions (i) and (ii) in {\Red Theorem \ref{th1.1}} are satisfied.
Then
\begin{equation}
i\mathbb R\subset \rho(\mathcal A).
\end{equation}
\end{proposition}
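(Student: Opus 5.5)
The plan is to show that no point $i\beta$, $\beta\in\mathbb R$, lies in $\sigma(\mathcal A)$. We already know $0\in\rho(\mathcal A)$ from Proposition \ref{th3.1}, and $\rho(\mathcal A)$ is open. If $i\mathbb R\not\subset\rho(\mathcal A)$, there is a boundary point $i\beta\neq 0$ of $\sigma(\mathcal A)\cap i\mathbb R$. Since $\mathcal A$ generates a contraction semigroup, such a point is an approximate eigenvalue. So we get $\beta_n\to\beta$ and $(U_n,V_n)\in\frak D(\mathcal A)$ with $\|(U_n,V_n)\|=1$ and $(i\beta_n-\mathcal A)(U_n,V_n)=(F_n,G_n)\to 0$. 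The argument below is written for an exact eigenvector, so $(F_n,G_n)=0$, $V=i\beta U$. The approximate case follows the same steps with $o(1)$ error terms. We keep $A$ diagonal as in \eqref{2.1} and split $U=(U_1,\dots,U_m)$ by blocks as in \eqref{2.3}.

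\emph{Step 1 (dissipation).} Taking the real part of the inner product of the equation with $(U,V)$ and using \eqref{3.5b} gives $\langle\!\langle D\mathcal GV,\mathcal GV\rangle\!\rangle_{\mathcal Z}=0$, or $o(1)$ in the approximate case. Since $D\geqslant 0$, this yields $D^{1/2}\mathcal G V=0$ and hence $D\mathcal GV=0$. As $V=i\beta U$ with $\beta\neq 0$, we get $D\,\mathcal GU=0$ in $\mathcal Z$. Moreover $\mathcal G$ acts componentwise and $A$ is a constant matrix, so $\mathcal G$ commutes with $A$.

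\emph{Step 2 (Kalman plus almost commutativity gives $\mathcal GU=0$).} Apply Lemma \ref{th2.4} pointwise to $\Phi=\mathcal GU\in Z^N$, using the $Z$ inner product:
\[
c\|\Phi\|_{\mathcal Z}^2\leqslant \|D\Phi\|_{\mathcal Z}^2-\sum_{k\neq l}\langle\!\langle D_l\Phi_l, D_k\Phi_k\rangle\!\rangle_{Z^N}.
\]
The cross terms involve the off-diagonal blocks $D_l^TD_k=(D^2)_{lk}$, $k\neq l$. Since $A$ is diagonal with distinct block eigenvalues, $(AX-XA)_{lk}=(\lambda_l-\lambda_k)X_{lk}$. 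Applying this to $X=D^2$, together with $AD^2-D^2A=(AD-DA)D+D(AD-DA)$, gives
\[
\|(D^2)_{lk}\|\leqslant \frac{2\|D\|\,\|AD-DA\|}{\min_{l\neq k}|\lambda_l-\lambda_k|}.
\]
Therefore the cross terms are bounded by $\varepsilon\|\Phi\|^2_{\mathcal Z}$ with $\varepsilon<c$ once $\|AD-DA\|$ is small enough, and this is exactly the smallness in assumption (i). Combined with $D\Phi=0$, we conclude $\mathcal GU=0$, or $\|\mathcal GU_n\|_{\mathcal Z}\to0$ in the approximate case.

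\emph{Step 3 (reduction to the scalar problem).} Now $D\mathcal G^*\mathcal GV=0$, so the system decouples. Each component $u=u^{(j)}$ in block $l$ satisfies
\[
-\beta^2u+Lu+\lambda_lu=0,\qquad gu=0,
\]
so $(u,i\beta u)$ is an eigenvector, with eigenvalue $i\beta$, of the scalar generator \eqref{1.6} with $\lambda=\lambda_l\geqslant0$. Exponential stability, assumption (ii), excludes imaginary eigenvalues, so $u=0$. Hence $U=0$, contradicting the normalization.

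In the approximate case we argue instead with the uniform boundedness of $(i\beta-\mathcal A_{\lambda_l})^{-1}$ on $i\mathbb R$, which exponential stability guarantees. We rewrite the $j$-th component as a scalar resolvent equation whose right-hand side is
\[
o(1)-\big(D\mathcal G^*\mathcal G V_n\big)_j .
\]
This right-hand side tends to $0$ in $H_{-1}$, and hence in the energy dual sense, because $\|\mathcal G V_n\|_{\mathcal Z}\to 0$ and $g^*$ is bounded from $Z$ into $H_{-1}$. It then follows that $\|(U_n,V_n)\|\to0$, the desired contradiction.

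The main obstacle is this last step. A forcing that is small in $H_{-1}$ must be controlled by the scalar resolvent, which is naturally bounded on $H_1\times H_0$. I would handle it by testing the component equation against the scalar adjoint resolvent applied to smooth data, or equivalently by using that $\mathcal G^*\mathcal G V_n$ appears in exactly the form of the scalar damping term. We then absorb it as $\langle\!\langle g v,g w\rangle\!\rangle_Z$ with $\|gv_n\|_Z\to0$.
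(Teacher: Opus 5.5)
Steps 1 and 3 are fine, but Step 2 fails in exactly the case the proposition is about. It uses nothing about $U$ except $D\Phi=0$ for $\Phi=\mathcal GU$. So what it really claims is the purely algebraic implication $D\Phi=0\Rightarrow\Phi=0$, i.e.\ that $D$ is injective. Consequently your smallness requirement (cross terms at most $\varepsilon\|\Phi\|^2_{\mathcal Z}$ with $\varepsilon<c$) can never be met when $\rank(D)<N$. To see this, take $De=0$ for a unit vector $e\in\mathbb R^N$ and $\Phi=z\,e$ with $\|z\|_Z=1$. Then \eqref{2.6} gives $\sum_{k\neq l}\langle\!\langle D_l\Phi_l,D_k\Phi_k\rangle\!\rangle_{\mathcal Z}=-\sum_l\|D_l\Phi_l\|^2_{\mathcal Z}\leqslant -c\|\Phi\|^2_{\mathcal Z}$. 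Your commutator estimate for $(D^2)_{lk}$ is correct, but for singular $D$ its right-hand side is never below $c$. Equivalently, $\|AD-DA\|$ is then bounded below by a positive quantity depending on $D$ and the spectral gap of $A$. So your proof only covers invertible $D$, which is the trivial case set aside in item (iii) of the remark after Theorem \ref{th1.1}. When $\rank(D)<N$ there is no algebraic way to pass from $D\mathcal GU=0$ to $\mathcal GU=0$, and Step 3 never starts.

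The missing idea is to apply Lemma \ref{th2.4} to $U$ in $\mathcal H_0$ rather than to $\mathcal GU$ in $\mathcal Z$. For an eigenvector, once $D\mathcal G^*\mathcal GU=0$, each component satisfies $\beta^2u^{(i)}-Lu^{(i)}-\lambda_ku^{(i)}=0$. The symmetry of $L$ then yields $(\lambda_k-\lambda_l)\langle\!\langle u^{(i)},u^{(j)}\rangle\!\rangle_{H_0}=0$ for indices in different blocks. The cross terms therefore vanish exactly, with no smallness at all, and $\|U\|^2_{\mathcal H_0}\leqslant c\|DU\|^2_{\mathcal H_0}$. Smallness enters only afterwards. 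Multiplying the system by $D$, each component of $(DU,DV)$ solves a scalar problem of the form \eqref{3.4}, whose damping term vanishes because $\mathcal GDV=D\mathcal GV=0$ and whose only forcing is the commutator term. The uniform resolvent bound \eqref{3.5} supplied by (ii) then controls $\|DU\|_{\mathcal H_1}$ by $C\|AD-DA\|$ times the norm of the solution. The two inequalities close once $\|AD-DA\|$ is small relative to these constants. In other words, (ii) must be used quantitatively, through the resolvent bound, and not only to exclude scalar eigenvectors with $gu=0$.

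Separately, the paper passes from the absence of imaginary eigenvalues to $i\beta\in\rho(\mathcal A)$ via the compact embedding $H_1\subset H_0$ and Fredholm's alternative for $I-\beta^2\mathcal A_0^{-1}$. Your approximate-eigenvalue route avoids compactness, which is legitimate. But then the orthogonality step and the $H_{-1}$-small forcing must be carried through with $o(1)$ errors, as in step (iii) of the proof of Theorem \ref{th1.1}, and you only sketch this.
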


 \begin{proof} (i)  Let $(U, V)$ be an eigenvector of $\mathcal A$ associated with a pure imaginary eigenvalue $i\beta$, i.e.,
\begin{equation}\begin{cases}i\beta U=V &\hbox{in } \mathcal H_1,\\
i\beta V= -\mathcal L U -AU - D\mathcal G^*\mathcal GV&\hbox{in } \mathcal H_0.
\end{cases} \label{3.6}
\end{equation}
Eliminating   $V$ in (\ref{3.6}) gives
\begin{equation*}
\beta^2 U - \mathcal L U - AU - i\beta D\mathcal G^*\mathcal G U = 0.
\end{equation*}
Since $\mathcal L$ is symmetric and  $\beta \not =0$, we deduce
\begin{equation}
\beta^2 U - \mathcal L U - AU = 0\quad \hbox{and}\quad D\mathcal G^*\mathcal G U =0.\label{3.7}
\end{equation}

Noting the diagonal form of $A$ in \eqref{2.1}, we obtain that each component $u^{(i)}$ satisfies
\begin{equation}
\beta^2 u^{(i)} - L u^{(i)} - \lambda_ku^{(i)} = 0,\quad  i=\mu_{k-1}+1,\ldots, \mu_k, \quad k=1,\ldots, m.\label{3.10}
\end{equation}
For $ i= \mu_{k-1}+1,\ldots,  \mu_k, \ j=\mu_{l-1}+1,\ldots,  \mu_l,\ k\not = l,$ taking the  inner product of $i$-th equation with  $u^{(j)}$,    $j$-th equation with  $u^{(i)}$, and then subtracting shows
\begin{equation}(\lambda_k-\lambda_l)\langle \!\langle u^{(i)}, u^{(j)}\rangle\!\rangle _{\mathcal H_0} = 0.\end{equation}
Let $D_l$ and $U_l$ be defined by \eqref{2.2} and \eqref{2.3}. Then
\begin{equation}\langle \!\langle D_kU_k,  D_lU_l\rangle\!\rangle _{\mathcal H_0} = 0\end{equation}
By  Lemma \ref{th2.4},
\begin{equation}\|U\|^2_{\mathcal  H_0} \leqslant c  \|DU\|^2_{\mathcal H_0}.\label{3.111}
\end{equation}

Now multiplying \eqref{3.6} by $D$ yields
\begin{equation}\begin{cases}i\beta DU-DV =0 &\hbox{in } \mathcal H_1,
\\ \noalign{\medskip}   \displaystyle
i\beta DV+ \mathcal L DU+ADU +  {\Red \mathcal G^*\mathcal G DV} = (AD-DA)V &\hbox{in } \mathcal H_0.
\end{cases} \label{3-121}
\end{equation}
{\Red Let us write
\begin{equation}\label{3.5c}DU= (\widehat u_1,\cdots, \widehat u_N),\quad DV=  (\widehat v_1,\cdots, \widehat v_N),\quad
(AD-DA)V =( \widehat f_{1},\cdots, \widehat f_{N}).\end{equation}
Noting the diagonal form of $A$ in \eqref{2.1}, we split the abstract system  \eqref{3-121} by component   for $1\leqslant i \leqslant N$: \begin{equation}\begin{cases}i\beta \widehat u_i- \widehat v_i =0 &\hbox{in } H_1,
\\   \displaystyle
i\beta \widehat v_i+ L \widehat u_i+\lambda_i\widehat u_i +  g^*g \widehat v_i =\widehat f_{i}&\hbox{in }  H_0.
\end{cases} \label{3-121b}
\end{equation}
Then applying  the estimate \eqref {3.5}, we deduce
\begin{equation}
\|\widehat u_i\|_{H_1}^2 + \|\widehat v_i\|_{H_0}^2 \leqslant c \|\widehat f_{i}\|_{H_0}^2. \label{3.5bb}
 \end{equation}
Noting \eqref{3.5c} and taking the sum  of \eqref{3.5bb}  for $1\leqslant i \leqslant N$, we get
\begin{equation}
\|DU\|^2_{\mathcal  H_1} +\|DV\|^2_{\mathcal H_0} \leqslant c' \|AD-DA\|^2 \|V\|^2_{\mathcal  H_0}.
\end{equation}
With \eqref{3.111} remaining valid for $V$, this gives
\begin{equation}\|V\|^2_{\mathcal  H_0}  \leqslant  c\|DA-AD\|^2 \|V\|^2_{\mathcal  H_0}.\end{equation}
When $\|AD-DA\|$ is sufficiently small, we get  $ U=V=0$. Hence $\mathcal A$ has no eigenvalues on $i\mathbb R$.}

(ii) Let $(F, G)\in \mathcal H_1\times \mathcal H_0$. We look for   $(U, V)\in \mathfrak{D}(\mathcal A)$ solving
\begin{equation}
(i\beta I - \mathcal A )(U, V)= (F, G),\label {3.25b}\end{equation}
i.e.,
\begin{equation} \label {3.26b} i\beta U- V=F, \quad i\beta V + \mathcal L  U +AU + D  \mathcal G^* \mathcal GV =G.\end{equation} Eliminating $V$ yields
\begin{align} V&=i\beta U-F,\label{3.27b}\\
-\beta^2U + \mathcal L U +AU +  i\beta  D \mathcal G^*\mathcal GU &=G+i\beta F +  D  \mathcal G^*\mathcal G G.\label{3.28b}\end{align}

\noindent
Define the linear operator $\mathcal A_0 : \, \mathcal H_1 \to \mathcal H_{-1}$   by
\begin{equation}\mathcal A_0U= \mathcal L U +AU +  i\beta  D \mathcal G^* \mathcal GU.\end{equation}
By the Lax-Milgram's Lemma \cite{brezis}, $\mathcal A_0$ is a continuous isomorphism from $\mathcal  H_1$ onto $\mathcal H_{-1}$.  Equation \eqref{3.28b} rewrites as \begin{equation}
(I-\beta^2\mathcal A_0^{-1}) U  = \mathcal A_0^{-1}(G+i\beta F+  D \mathcal G^* \mathcal G G). \label{3.29b}
\end{equation}

\noindent
If $U\in\Ker(I-\beta^2\mathcal A_0^{-1})$, then
\begin{equation}\beta^2 U=\mathcal L U +AU +  i\beta  D \mathcal G^* \mathcal GU.\end{equation}
Setting $V=i\beta U$ gives again system \eqref{3.6}, whose uniqueness implies
$U=0$. Thus $\Ker(I-\beta^2 \mathcal A^{-1}_0)=\{0\}.$    Furthermore,  noting that   $\mathcal A_0$ is a continuous isomorphism from $\mathcal  H_1$ onto $\mathcal H_{-1}$. Due to the compact embedding \eqref{3.1}, $\mathcal A_0^{-1}$ is compact  from  $\mathcal  H_{-1}$ onto $\mathcal H_1$. Fredholm's alternative (\cite{brezis}) implies that \eqref{3.29b} has a unique solution   $U\in H_0$.
Setting $V$ by \eqref{3.27b}, we obtain a unique $(U,V)$ solving \eqref{3.25b}.  The proof is complete. \end{proof}

\medskip
\medskip

\begin{proposition} Assume that   the corresponding scalar problem \eqref{1.6}
 is   exponentially stable in $H_1\times H_0$.  Then  there exists a constant $c>0$, such that  for all   $\beta\in \mathbb R$ and
 $(f_1, f_0)\in
 H_1\
\times H_0$,  the solution $(u, v)$  to  the  system
\begin{equation}\begin{cases}i\beta u-v=f_1 &\mbox{in }\;\;H_1,\\
i\beta v+ L u+\lambda u +g^*gv= f_0   &\mbox{in } \;\; H_0
\end{cases} \label{3.4}
\end{equation} satisfies  the estimate
\begin{equation}
\|u\|_{H_1}^2 + \|v\|_{H_0}^2 \leqslant c(\|f_1\|_{H_1}^2 + \|f_0\|_{H_0}^2). \label{3.5}
 \end{equation}
 \end{proposition}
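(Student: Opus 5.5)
The plan is to use the classical characterization of exponential stability by uniformly bounded resolvents on the imaginary axis (Gearhart--Pr\"uss--Huang). Let $A_\lambda$ be the generator of the scalar problem \eqref{1.6} on $H_1\times H_0$, equipped with the energy inner product $\langle\!\langle u,\hat u\rangle\!\rangle_{H_1}+\lambda\langle\!\langle u,\hat u\rangle\!\rangle_{H_0}+\langle\!\langle v,\hat v\rangle\!\rangle_{H_0}$. For $\lambda\geqslant 0$ this norm is equivalent to the standard norm of $H_1\times H_0$, with constants depending only on $\lambda$ and the embedding constant of $H_1\subset H_0$. Define $A_\lambda(u,v)=(v,-Lu-\lambda u-g^*gv)$ on its natural domain. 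Exactly as in Proposition \ref{th3.1} (the case $N=1$, $A=\lambda$, $D=1$), $A_\lambda$ is dissipative, $0\in\rho(A_\lambda)$, and so it generates a $C_0$ semigroup of contractions. System \eqref{3.4} is precisely $(i\beta I-A_\lambda)(u,v)=(f_1,f_0)$. Therefore \eqref{3.5} is equivalent to
\[
\sup_{\beta\in\mathbb R}\|(i\beta I-A_\lambda)^{-1}\|<\infty ,
\]
up to the norm-equivalence constants.

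The key steps are as follows. \textbf{First}, I will use assumption (ii): $\|e^{tA_\lambda}\|\leqslant Me^{-\omega t}$. Then the Laplace transform representation
\[
(\mu I-A_\lambda)^{-1}=\int_0^\infty e^{-\mu t}e^{tA_\lambda}\,dt
\]
converges for every $\mu$ with $Re\,\mu>-\omega$. In particular it converges for $\mu=i\beta$, which gives $i\mathbb R\subset\rho(A_\lambda)$ and
\[
\|(i\beta I-A_\lambda)^{-1}\|\leqslant \int_0^\infty Me^{-\omega t}\,dt=M/\omega
\]
uniformly in $\beta$. This is the easy direction of Gearhart--Pr\"uss, so no Hilbert-space Plancherel argument is actually needed.

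\textbf{Second}, I will translate the bound back. The solution $(u,v)$ of \eqref{3.4} exists, is unique, and satisfies
\[
\|u\|_{H_1}^2+\|v\|_{H_0}^2\leqslant C\big(\|u\|_{H_1}^2+\lambda\|u\|_{H_0}^2+\|v\|_{H_0}^2\big)\leqslant C(M/\omega)^2\big(\|f_1\|_{H_1}^2+\lambda\|f_1\|_{H_0}^2+\|f_0\|_{H_0}^2\big).
\]
Since $\|f_1\|_{H_0}\leqslant C\|f_1\|_{H_1}$, this gives \eqref{3.5}.

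One point needs care, and it is the main obstacle. Proposition \ref{th3.2} applies the estimate with the various coupling eigenvalues $\lambda_i$ of $A$, so assumption (ii) has to be understood as holding for each $\lambda\geqslant0$ that is used, in particular each $\lambda_l$. If (ii) is only known for one fixed $\lambda$, say $\lambda=0$, I would transfer it by a perturbation argument. The term $\lambda u$ is a bounded perturbation $B(u,v)=(0,-\lambda u)$, and it is even compact because $H_1\subset H_0$ is compact. Then
\[
i\beta I-A_\lambda=(i\beta I-A_0)\big(I-(i\beta I-A_0)^{-1}B\big).
\]
For large $|\beta|$, the norm $\|(i\beta I-A_0)^{-1}B\|$ tends to $0$, because the first component of $(i\beta I-A_0)^{-1}(0,h)$ is $O(|\beta|^{-1})$ in $H_1$ when $h\in H_0$ (from the relation $i\beta u=v+f_1$ together with the resolvent bound). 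Hence the resolvent of $A_\lambda$ is uniformly bounded for large $|\beta|$.

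For bounded $\beta$, I will argue by compactness and the Fredholm alternative, as in part (ii) of Proposition \ref{th3.2}. It then remains to exclude imaginary eigenvalues of $A_\lambda$. Such an eigenvalue would give $gu=0$ together with $\beta^2u=Lu+\lambda u$. Ruling this out is the delicate unique-continuation point. I would deduce it from the exponential stability of $A_0$ by noting that the same $u$ then solves an undamped eigenproblem with $gu=0$, which contradicts observability. I expect this step to need the extra hypothesis, so the cleanest route is to state (ii) for each relevant $\lambda$.
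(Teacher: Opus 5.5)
Your proposal is correct and follows the paper's proof: the paper cites the Huang--Pr\"uss theory to get $\sup_{\beta\in\mathbb R}\|(i\beta I-\widetilde{\mathcal A})^{-1}\|<\infty$ for the scalar generator and reads off \eqref{3.5}, while you make the only direction actually needed explicit through the Laplace transform $\int_0^\infty e^{-i\beta t}e^{t\widetilde{\mathcal A}}\,dt$, and your reading of (ii) as holding for each $\lambda\geqslant 0$ used is how the paper applies it (the examples check stability for all $\lambda\geqslant 0$). Your optional perturbation aside is unnecessary and two of its claims are off: the first component of $(i\beta I-A_0)^{-1}(0,h)$ is only $O(|\beta|^{-1})$ in $H_0$, not in $H_1$; the smallness of $\|(i\beta I-A_0)^{-1}B\|$ comes instead from strong convergence of the resolvent to $0$ together with compactness of $B$; and no extra hypothesis is needed to exclude imaginary eigenvalues, since $gu=0$, $Lu=(\beta^2-\lambda)u$ with $\beta^2-\lambda=\|u\|_{H_1}^2/\|u\|_{H_0}^2>0$ would make $i\sqrt{\beta^2-\lambda}$ an eigenvalue of $A_0$.
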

 \begin{proof}By the classic theory on semi-groups (see \cite{huang, pruss}), the resolvent  of the infinitesimal generator
 \begin{equation} \widetilde{\mathcal A}(u,
v) = (v,
-Lu -\lambda  u -g^*gv)\end{equation}
is uniformly bounded on the imaginary axis:
$${\Red \sup_{\beta\in \mathbb R}}\|(i\beta I-\widetilde{\mathcal A})^{-1}\|\leqslant c<+\infty.$$
Writing $$ (i\beta I-\widetilde{\mathcal A})(u, v)= (f_1, f_0), $$
we immediately obtain the estimate \eqref{3.5}.
\end{proof}

\medskip

\begin{remark}\label{m-p}
Before giving the proof of the main theorem \ref{th1.1}, we recall some algebraic preliminaries.
Let $p_1, p_2,\ldots, p_N$ be an orthonormal basis of $\mathbb R^N$. Let
$$0<\delta_1\leqslant \ldots \leqslant \delta_d$$such that
$$Dp_j= \delta_jp_j, \quad  j=1,\ldots, d  \quad \hbox{and}\quad
Dp_j= 0,\quad \quad j=d+1,\ldots, N.$$

Define
$$M = \begin{pmatrix} \widehat M&0\\
0&0\end{pmatrix}_{N\times N} \quad \hbox{and}\quad
\widehat M =  \begin{pmatrix}  \delta_1\\
&\ddots\\ &&\delta_d \end{pmatrix}_{d\times d}.$$
Setting  $P= (p_1, p_2,\ldots, p_N)$ such that $D=P M P^T$, we write
$$P=(\widehat P, \; \widetilde P) \quad \hbox{with } \;\widehat P =(p_1,\ldots, p_d) \quad \hbox{and}\quad
 \widetilde P =(p_{d+1},\ldots, d_N).$$
For $U= (u^{(1)},  \ldots, u^{(N)})^T $,  we have
$$
P^TU= \begin{pmatrix}  \widehat P^T U\\ \widetilde P^TU \end{pmatrix} = \begin{pmatrix}  \widehat U \\ \widetilde U \end{pmatrix}.
$$
Then,
$$DU = P MP^TU =  P  \begin{pmatrix} \widehat M&0\\
0&0 \end{pmatrix} \begin{pmatrix}\widehat U\\ \widetilde U \end{pmatrix}= P \begin{pmatrix}\widehat M\widehat U\\ 0 \end{pmatrix},$$
which implies
\begin{equation}\|DU\|_{\mathbb R^N}^2 = \|\widehat M\widehat U\|_{\mathbb R^d}^2\leqslant \delta_d^2\|\widehat U\|_{\mathbb R^d}^2. \label  {3.15} \end{equation}
\end{remark}
\medskip

We now turn to the proof of Theorem \ref{th1.1}, which relies on the characterization in Lemma \ref{th3.3}.

\medskip

{\it Proof of Theorem \ref{th1.1}.} The condition  \eqref{3.13}  follows directly  from  Proposition  \ref{th3.2}.
Suppose, by contradiction, that \eqref{3.14} fails with  $\theta=1/2(1+r)$. By the uniform boundedness theorem,  there exists a sequence  of $\{\beta_n\} \in \rr$ with $\beta_n\to+\infty$ (assume $\beta_n>0$ without loss of generality) and a sequence of  elements
$(U_n, V_n) \in \frak D(\AA)$  such that
\begin{equation}\|(U_n, V_n)\|_{\mathcal H_1\times \mathcal  H_0}  = 1,\quad n\geqslant 1\label  {3.17}
\end{equation}
and
\begin{equation}
\lim_{\beta_n\rightarrow +\infty}\beta_n^{2(1+r)}\| (i\beta_n I - \AA)
(U_n, V_n)\|_{{\mathcal H_1\times \mathcal  H_0} } = 0.\label  {3.18}
\end{equation}

We shall show that $\|(U_n, V_n)\|_ {\mathcal H_1\times \mathcal  H_0} =o(1)$, which  contradicts \eqref  {3.17}.  For clarity, the proof  is  divided into several steps.

\medskip

(i)  Since  $W_n=(U_n, V_n)$ is bounded in ${\mathcal H_1\times \mathcal  H_0} $,   it follows from \eqref  {3.18}   that
\begin{align*}&\beta_n^{2(1+r)}\langle \! \langle {\Red i\beta_n} W_n - \AA W_n, W_n\rangle \!\rangle_{\mathcal H_1\times \mathcal  H_0}\\
=\;&\beta_n^{2(1+r)}\big(i\beta_n \|W_n \|_{\mathcal H_1\times \mathcal  H_0} ^2 - \langle\!\langle \AA W_n,  W_n\rangle\!\rangle _{\mathcal H_1\times \mathcal  H_0} \big)=o(1).
\end{align*}
Using  \eqref{3.5b}, we deduce
\begin{equation}\langle \! \langle D\mathcal GV_n, \mathcal GV_n\rangle\!\rangle_{\mathcal  Z}= \frac{o(1)}{ \beta_n^{2(1+r)}}, \label  {3.19} \end{equation}
namely,
\begin{equation}\label  {3.22} \|D\mathcal GV_n\|_{\mathcal Z} = \frac{o(1)}{ \beta_n^{1+r}}.
\end{equation}

\medskip
(ii) We rewrite \eqref  {3.18} as
\begin{equation}\begin{cases}\displaystyle\beta_n^{2(1+r)}(i\beta_n U_n-V_n)=F_n\rightarrow 0  &\hbox{in } \mathcal H_1,\\  \noalign{\medskip}
\beta_n^{2(1+r)}(i\beta_n V_n+ \mathcal L U_n+AU_n +D\mathcal G^*\mathcal GV_n) = G_n \rightarrow 0  &\hbox{in } \mathcal H_0.
\end{cases} \label  {3.23}
\end{equation}
From the first equation in \eqref{3.23}
\begin{equation}V_n=i\beta_n U_n - \frac{F_n}{\beta_n^{2(1+r)}} \quad \hbox{in } \mathcal H_1.\label  {3.24}
\end{equation}
Combining this with  \eqref{3.17}  yields
\begin{equation}
\|U_n\|_{\mathcal H_1}=\mathcal O(1),\quad \|U_n\|_{\mathcal H_0} =\frac{\mathcal O(1)}{\beta_n}.
\label  {3.25}
\end{equation}
Therefore, due to   the regularity  assumption  \eqref{1.5}  together with \eqref{3.25}, we conclude
\begin{equation}
\|\mathcal GU_n\|_{\mathcal Z} \leqslant c\|U_n\|^{r}_{\mathcal H_1}\|U_n\|^{1-r}_{\mathcal H_0} = \frac{\mathcal O(1)}{\beta_n^{1-r}}, \quad 0\leqslant  r\leqslant 1.
\label  {3.26}
\end{equation}
Eliminating   $V_n$ in \eqref{3.23} gives
\begin{equation}
\beta_n^2 U_n - \mathcal L U_n - AU_n - D\mathcal G^* \mathcal GV_n = -\frac{G_n}{\beta_n^{2(1+r)}}- \frac{iF_n}{\beta_n^{1+2r}}.
\label  {3.27}
\end{equation}
 Taking the inner product of  \eqref{3.27} with  $U_n$ in  $\mathcal H_{0}$ and using \eqref{3.25}, we obtain
\begin{equation}
\beta_n^2 \langle\!\langle U_n, U_n\rangle\!\rangle_{\mathcal H_0} -\langle\!\langle\mathcal L U_n+D\mathcal G^* \mathcal G V_n, U_n\rangle\!\rangle_{\mathcal H_0}  -
\langle\!\langle AU_n, U_n \rangle\!\rangle_{\mathcal H_0} = \frac{o(1)}{\beta_n^{2(1+r)}}.
\label  {3.28}
\end{equation}
Finally, by    \eqref{3.22}  and \eqref{3.26},
\begin{align*} &|\langle\!\langle\mathcal L U_n+ D\mathcal G ^*\mathcal G V_n, U_n\rangle\!\rangle_{\mathcal H_0}| \\
 \noalign{\medskip}
\leqslant  \;&
\langle\!\langle\mathcal L U_n, U_n\rangle\!\rangle_{\mathcal H_{-1}; \mathcal H_1} + | \langle\!\langle D\mathcal G^*\mathcal G V_n, U_n\rangle\!\rangle_{\mathcal H_{-1}; \mathcal H_1}|
\\
 \noalign{\medskip}
 =\;&\langle\!\langle U_n, U_n\rangle\!\rangle _{\mathcal H_1}  +  | \langle\!\langle D\mathcal G V_n, \mathcal  GU_n\rangle\!\rangle_{\mathcal Z}| \\
\leqslant \;&\langle\!\langle U_n, U_n\rangle\!\rangle _{\mathcal H_1}  +   \frac{o(1)}{\beta_n^{2}}.\end{align*}
Then substituting this   into \eqref{3.28} yields
\begin{equation}\|\beta_n U_n\|^2_{\mathcal H_0} -  \|U_n\|^2_{\mathcal H_1}  = \frac{o(1)}{\beta_n^{2}}. 
\label  {3.29}
\end{equation}

(iii) Recalling the diagonal form of
  $A$ in \eqref{2.1}, we rewrite \eqref{3.27} as
\begin{eqnarray}
\beta_n^2 u_n^{(i)} - L u_n^{(i)} - g^*(D\mathcal GV_n)^{(i)} - \lambda_ku_n^{(i)}  = \frac {o(1)}{\beta_n^{1+2r}} \quad   \hbox{in } H_0,
\label  {3.30}
\end{eqnarray}
where $i= \mu_{k-1}+1,\ldots, \mu_{k},\; k=1,\ldots,m$. Here,  $(D\mathcal G V_n )^{(i)} $ denotes the $i$-th composent  of $D\mathcal G V_n $,  and $o(1)$ is a quantity vanishing in $H_0$ as $n \to \infty$.

For $ i= \mu_{k-1}+1,\ldots, \mu_{k}$ and $j= \mu_{l-1}+1,\ldots, \mu_{l}$ with $
k\not = l,$ we take the inner product of the $i$-th equation in  \eqref  {3.30}  with $u_n^{(j)}$  in $H_{0}$,  and of the $j$-th equation with $ u_n^{(i)}$. Since both terms are of order $\beta_n^{-1}$ in $H_{0}$,  we obtain
\begin{align} \label  {3.31}
\beta_n^2 \langle\!\langle u_n^{(i)},  u_n^{(j)} \rangle\!\rangle_{H_0} - \lambda_k \langle\!\langle u_n^{(i)},  u_n^{(j)}\rangle\!\rangle_{H_0}
=  \langle\!\langle L u_n^{(i)}+ g^*(D\mathcal G V_n )^{(i)},  u_n^{(j)}\rangle\!\rangle
_{H_0}+ \frac{o(1)}{\beta_n^{2(1+r)}},
\end{align}
and
\begin{align}\label  {3.32}
 \beta_n^2 \langle\!\langle u_n^{(j)},  u_n^{(i)} \rangle\!\rangle _{H_0}   - \lambda_l \langle\!\langle u_n^{(j)},  u_n^{(i)}\rangle\!\rangle _{H_0}
=  \langle\!\langle L u_n^{(j)}+g^*(D\mathcal G V_n )^{(j)},  u_n^{(i)}\rangle\!\rangle_{H_0}+\frac{o(1)}{\beta_n^{2(1+r)}}.
\end{align}
It is clear that
\begin{align*}&\langle\!\langle L u_n^{(i)}+ g^*(D\mathcal G V_n)^{(i)},  u_n^{(j)}\rangle\!\rangle_{H_0} \\
=&\langle\!\langle L u_n^{(i)},  u_n^{(j)}\rangle\!\rangle_{H_{-1}; H_1}
+\langle\!\langle g^*(D\mathcal G V_n  )^{(i)},  u_n^{(j)}\rangle\!\rangle_{H_{-1}; H_1}
\\=&\langle\!\langle u_n^{(i)},  u_n^{(j)}\rangle\!\rangle_{ H_1}
+\langle\!\langle (D\mathcal GV_n )^{(i)},  gu_n^{(j)}\rangle\!\rangle_{Z}. \end{align*}
By  \eqref {3.22} and \eqref{3.26},
$$ \langle\!\langle (D\mathcal G V_n)^{(j)},  gu_n^{(i)} \rangle\!\rangle _{Z}  =  \frac{o(1)}{\beta_n^2}.$$
Substituting these into \eqref{3.31}-\eqref{3.32}, we obtain
\begin{align}
\beta_n^2 \langle\!\langle u_n^{(i)},  u_n^{(j)} \rangle\!\rangle_{H_0} - \langle\!\langle u_n^{(i)},  u_n^{(j)}\rangle\!\rangle
_{H_1}  - \lambda_k \langle\!\langle u_n^{(i)},  u_n^{(j)}\rangle\!\rangle_{H_0}  \label  {3.33}  = \frac{o(1)}{\beta_n^{2}},\\
\beta_n^2 \langle\!\langle u_n^{(j)},  u_n^{(i)} \rangle\!\rangle _{H_0} -  \langle\!\langle u_n^{(j)},  u_n^{(i)}\rangle\!\rangle_{H_1} \label  {3.34}  - \lambda_l \langle\!\langle u_n^{(j)},  u_n^{(i)}\rangle\!\rangle _{H_0} = \frac{o(1)}{\beta_n^{2}}.
\end{align}
Then,  the difference  of   \eqref  {3.33}  and  \eqref  {3.34}  gives
\begin{equation}(\lambda_k-\lambda_l)\langle \!\langle u_n^{(i)}, u_n^{(j)}\rangle\!\rangle _{H_0} = \frac{o(1)}{\beta_n^{2}},\quad k\not = l.
\label  {3.35}
\end{equation}
For  $l=1,\cdots,m$, let $D_l$ and $U_{nl}$ be defined by \eqref{2.2} and \eqref{2.3} respectively.  Hence \eqref{3.35} implies
\begin{equation*}\sum_{k\not =l}\beta_n^2\langle \!\langle D_lU_{nl}, D_kU_{nk}\rangle\!\rangle _{\mathcal H_0}=o(1).\end{equation*}
Therefore, applying {\Red  Lemma \ref{th2.4}}, we obtain
\begin{equation}\|\beta_nU_n\|^2_{\mathcal  H_0} \leqslant c  \|D\beta_nU_n\|^2_{\mathcal H_0}  +o(1).\label  {3.36} \end{equation}

 (iv)  Multiplying \eqref  {3.23}  by $p_j^T$, which is defined in Remark \ref{m-p}  and setting
 \begin{equation}
\widehat u_n^{(j)}= p_j^TU_n, \quad \widehat v_n^{(j)}= p_j^TV_n,
\;\; j= 1, \cdots,d,\end{equation}
we obtain
\begin{equation}\begin{cases} \displaystyle
i\beta_n \widehat u_n^{(j)}-  \widehat v_n^{(j)}=\frac{p_j^TF_n}{\beta_n^{2(1+r)}},\\ \displaystyle
i\beta_n \widehat v_n^{(j)}+ L \widehat u_n^{(j)}+  \delta_j \widehat u_n^{(j)}+ g^*g \delta_j\widehat v_n^{(j)}   = p_j^T(D-A)U_n  +\frac{p_j^TG_n}{\beta_n^{2(1+r)}}.
\end{cases} \label  {3.37}
\end{equation}
Applying  the estimate \eqref {3.5}, we deduce
\begin{equation*}\|\widehat u_n^{(j)}\|^2_{H_1} +  \| \widehat v_n^{(j)}\|^2_{H_0} \leqslant c \|p_j^T(D-A)U_n\|^2_{H_0}
+ o(1),\quad j=1,\ldots, d. \end{equation*}
Hence,
\begin{equation*}\|\widehat U_n\|^2_{(H_1)^d} +  \| \widehat V_n\|^2_{(H_0)^d} \leqslant c \|D-A\|^2 \|U_n\|^2_{\mathcal  H_0}
+ o(1). \end{equation*}
Using \eqref{3.15}, we further obtain
\begin{equation}\label{3.46}\frac{1}{\delta_d^2}\|DV_n\|^2_{\mathcal H_0} \leqslant c \|D-A\|^2 \|U_n\|^2_{\mathcal  H_0}
+ o(1). \end{equation}
Combining this with  \eqref{3.24} yields
\begin{equation}\label{3.47}\frac{1}{\delta_d^2}\|\beta_nDU_n\|^2_{\mathcal H_0} \leqslant c \|D-A\|^2 \|U_n\|^2_{\mathcal  H_0}
+ o(1). \end{equation}
Thus, from \eqref  {3.36} and \eqref{3.47},
\begin{equation*}\|\beta_nU_n\|^2_{\mathcal  H_0}  \leqslant  c\delta_d^2 \|D-A\|^2 \|U_n\|^2_{\mathcal  H_0}  +  o(1).\end{equation*}
 Therefore, by \eqref{3.25} and   the fact that  $\beta_n\rightarrow +\infty$,  we   conclude that
\begin{equation*}\|\beta_nU_n\|^2_{\mathcal  H_0}= o(1),\end{equation*}
which, together  with \eqref  {3.29}  implies
\begin{equation*}\|\beta_nU_n\|^2_{\mathcal  H_0} + \|U_n\|^2_{\mathcal  H_1}= o(1).\end{equation*}
This yields the desired contradiction, completing the proof.   \hfill $\square$

\medskip

\begin{remark} \label{rt}
    From \eqref{3.26} in the above proof, the polynomial estimate in Theorem \ref{th1.1} also holds  if the regularity assumption \eqref{1.5} is replaced by the following condition:
    \begin{equation}\label{rt1}
\|\mathcal GU\|_{\mathcal Z} \leqslant   \frac{c}{\beta_n^{1-r}}\big(\|(U, V)\|_{\mathcal H_1\times \mathcal  H_0}+\|(\widetilde {F}, \widetilde {G})\|_{\mathcal H_1\times \mathcal  H_0}\big), \quad 0\leqslant  r\leqslant 1,
 \end{equation}
where $\beta \in {\mathbb R}$  is sufficiently large,  and $  (U, V),\;   (\widetilde {F}, \widetilde {G}) \in \mathcal H_1\times \mathcal  H_0$ satisfy
 \begin{equation} \label{rt2}
\left\{
\begin{array}{ll}
 i\beta  U -V  =\widetilde {F} &\hbox{in }  \mathcal H_1,\\
 i\beta  V+ \mathcal L U+AU  +D\mathcal G^*\mathcal GV = \widetilde {G} &\hbox{in }  \mathcal H_0.
\end{array}\right.
\end{equation}
\end{remark}

\bigskip

 \section{Examples of application}    \label{4}

In this section, we present several examples to illustrate the abstract result. Throughout, we assume that $\Omega\subset \mathbb R^n$ is a bounded domain with  smooth boundary $\Gamma$, and that the matrices $A$ and $D$ satisfy   conditions  in Theorem \ref{th1.1}.

\medskip

 \begin{example} \label{ex41}  {\bf Coupled wave equations with local viscous damping. }
\begin{equation}\begin{cases}
U''-\Delta U + AU   + D\chi_\omega U'=  0 & \rm{in }  \;\; \rr^+\times
\Omega,\\
U=0&\rm{on } \;\;
\rr^+\times \Gamma,\\
U(0)=U_0,\quad U'(0)=U_1&  \rm{in } \;\; \Omega.
 \label   {4.1} \end{cases}
\end{equation}
 \end{example}
We set
\begin{equation}
\label{space-wave}
Z= H_0= L^2(\Omega),\quad H_1= H^1_0(\Omega),\quad   H_{-1} =   H^{-1}(\Omega).\end{equation}
Multiplying  system   \eqref   {4.1} by $\Phi \in  ( H^1_0(\Omega))^N$ and integrating by parts, we obtain the variational formulation:
\begin{equation}
\int_{\Omega}\big(\langle U'', \Phi\rangle + \langle\nabla U,\nabla \Phi\rangle+  \langle AU, \Phi\rangle\big)dx+\int_{\omega} \langle DU', \Phi\rangle dx=0,\label   {4.2}
\end{equation}here and hereafter,  the symbol $\langle \cdot, \cdot\rangle $ denotes the inner product in the Euclidean space
$\mathbb R^m$
with the dimension $m$ chosen appropriately depending on the context.

Define the linear operator $L: H^1_0(\Omega) \to H^{-1}(\Omega)$ by
\begin{equation}
\label{operator-wave}
\langle\!\langle Lu, \phi\rangle\!\rangle_{H^{-1}(\Omega); H^1_0(\Omega)}  = \int_\Omega  \langle\nabla u,  \nabla \phi \rangle dx, \quad \forall\;
u, \;  \phi \in  H^1_0(\Omega).
\end{equation}
 The damping operator $g=\chi_\omega I$ is continuous  from $H^1_0(\Omega)$ into $L^2(\Omega)$ and satisfies
\begin{align*}\langle\!\langle  g^*g  v, \phi\rangle\!\rangle_{H^{-1}(\Omega); H^1_0(\Omega)} =  \langle\!\langle  g  v,  g\phi\rangle\!\rangle_{L^2(\Omega)}  =\int_\omega v \phi dx, \quad
\forall\; v, \phi \in  H^1_0(\Omega).
 \end{align*}
 Then the  variational problem \eqref   {4.2}  can be  written in the form   \eqref{1.3}.

Let $\mathcal A$ be defined by \eqref{3.1}-\eqref{3.2} with
$ {\frak D}(\mathcal A) = (( H^2(\Omega))^N\cap (  H^1_0(\Omega) )^N) \times  (  H^1_0(\Omega))^N. $
By Proposition  \ref{th3.1},  the operator $\mathcal A$ generates a C$_0$ semigroup of  contractions  on
$ (   H^1_0(\Omega))^N \times  ( L^2(\Omega))^N$.

\medskip

 \begin{theoreme} \label   {th4.1}
   Assume that
 $\omega\subset \Omega$  is a subdomain containing a neighborhood of the boundary $\Gamma$. Assume furthermore that the matrices $A$ and $D $ satisfy Kalman's rank condition \eqref{1.4} and $\|AD-DA\|$ is sufficiently small. Then there exists a constant
  $M>0$ such that,   for any    initial data $(U_0, U_1) \in \frak D(\mathcal A) $, the solution $U$ to system  \eqref{4.1}
satisfies the polynomial decay estimate:
\begin{equation} \label   {4.3}
\|(U, U')\|_{ (   H^1_0(\Omega))^N \times  ( L^2(\Omega))^N} \leqslant \frac{M}{t^{1/2}}\|(U_0, U_1)\|_{ (   H^2(\Omega))^N \times  ( H^1(\Omega))^N}, \quad   t>1.
\end{equation}
\end{theoreme}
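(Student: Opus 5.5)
Theorem \ref{th4.1} will follow from Theorem \ref{th1.1} once its three hypotheses are checked in the setting \eqref{space-wave}--\eqref{operator-wave}. The choice is $L=-\Delta$ with Dirichlet conditions, $g=\chi_\omega I$ mapping $H^1_0(\Omega)$ into $Z=L^2(\Omega)$, and $\lambda=\lambda_l\geqslant 0$ ranging over the eigenvalues of $A$. The algebraic hypothesis (i) is assumed directly in the statement: Kalman's condition \eqref{1.4}, smallness of $\|AD-DA\|$, and symmetry and positive semi-definiteness of $A$ and $D$.

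For the regularity condition (iii), observe that
$$\|gu\|_{L^2(\Omega)}=\|u\|_{L^2(\omega)}\leqslant \|u\|_{L^2(\Omega)}.$$
Hence \eqref{1.5} holds with $r=0$. The decay exponent in \eqref{1.7} then becomes $1/(2(1+0))=1/2$, which is exactly the rate in \eqref{4.3}. To pass from $\frak D(\mathcal A)$ to the norm written in \eqref{4.3}, it suffices to note that $\frak D(\mathcal A)=((H^2\cap H^1_0)(\Omega))^N\times (H^1_0(\Omega))^N$ and that the graph norm is equivalent to the $H^2\times H^1$ norm on this space. This equivalence uses elliptic regularity for the Dirichlet Laplacian, together with the fact that $AU$ and $D\chi_\omega V$ are bounded in $L^2$.

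The main point is hypothesis (ii): exponential stability in $H^1_0\times L^2$ of the scalar equation
$$u''-\Delta u+\lambda u+\chi_\omega u'=0.$$
This must hold for every $\lambda=\lambda_l\geqslant 0$, since \eqref{3.4}--\eqref{3.5} are applied with each $\lambda_i$ in turn, and only finitely many such values occur. Since $\omega$ contains a neighborhood $\mathcal O_\varepsilon$ of $\Gamma$, I would invoke the classical result for locally distributed damping localized near the boundary (Zuazua's multiplier argument, or the geometric control condition of Bardos--Lebeau--Rauch \cite{bardos1992}, which such $\omega$ satisfies).

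If a self-contained argument is preferred, I would use the frequency-domain method. First, prove the uniform resolvent bound $\sup_\beta\|(i\beta-\widetilde{\mathcal A})^{-1}\|<\infty$ by contradiction. The dissipation gives $\|v_n\|_{L^2(\omega)}\to 0$. The multiplier $h\cdot\nabla u$, with $h(x)=x-x_0$ cut off so that it is supported near $\Gamma$ and equal to $x-x_0$ outside $\omega$, combined with the multiplier $u$ and a cutoff function localizing the energy inside $\omega$, then yields $\|(u_n,v_n)\|\to 0$. The boundary terms are handled by the cutoff, because $\omega$ covers the whole boundary. The zero-order term $\lambda u$ is a compact perturbation, so it is harmless at high frequency.

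Second, show $i\mathbb R\subset\rho(\widetilde{\mathcal A})$. This reduces to unique continuation: a Dirichlet eigenfunction of $-\Delta+\lambda$ that vanishes on the open set $\omega$ must vanish identically. Together with Proposition \ref{th3.1}, this gives exponential stability via the Huang--Prüss theorem.

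I expect the multiplier/observability step to be the main obstacle, and I would rely on the cited literature for it. With (i)--(iii) verified, Theorem \ref{th1.1} with $r=0$ gives \eqref{4.3}.
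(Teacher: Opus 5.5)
Your proposal is correct and follows the paper's proof. Like the paper, you verify (iii) with $r=0$ via $\|u\|_{L^2(\omega)}\leqslant\|u\|_{L^2(\Omega)}$, cite the known exponential stability of the locally damped scalar wave equation for every $\lambda\geqslant 0$ when $\omega$ contains a neighborhood of $\Gamma$, and apply Theorem~\ref{th1.1} to get the rate $t^{-1/2}$; your remark that the graph norm is equivalent to the $H^2\times H^1$ norm is a useful detail the paper leaves implicit. There is one small slip in your optional multiplier sketch: the cut-off field should \emph{vanish} near $\Gamma$ (and equal $x-x_0$ outside $\omega$), not be supported near $\Gamma$. That sketch is not needed, though, since, like the paper, you rely on the literature for hypothesis (ii).
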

\begin{proof}  The damping operator $g=\chi_\omega I$   satisfies  the  regularity condition \eqref{1.5} with $r=0$:
$$\|g u\|_{L^2(\Omega)} =\|u\|_{L^2(\omega)} \leqslant \|u\|_{L^2(\Omega)}.$$
Moreover, when the subdomain  $\omega$ contains a neighborhood of  $\Gamma$,
the following problem \begin{equation}\begin{cases}
u'' -\Delta u  + \lambda  u + \chi_\omega u'=  0 & \mbox{in }  \rr^+\times
\Omega,\\
u=0&\mbox{on }
\rr^+\times \Gamma
\end{cases}
\end{equation}
is exponentially  stable  for any $\lambda\ge 0$ (see \cite{bardos1992,chen1991, tebou1998}).  Therefore, by applying Theorem \ref{th1.1} with $r=0$, we obtain the polynomial decay rate \eqref{4.3},  whose optimality will be further examined in Example \ref{example1}.
\end{proof}

\medskip

 \begin{example} \label{ex42}  {\bf Coupled plate equations with local viscous damping.}
\begin{equation}\left\lbrace
\begin{array}{ll}\label   {4.4} U''+\Delta^2 U + AU +D\chi_\omega U'=  0& \rm{in } \;\;
 \rr^+\times
  \Omega,\\
U= \partial_\nu U=0& \rm{in } \;\;
\rr^+\times \Gamma£¬\\
U(0)=U_0,\quad U'(0)=U_1&  \rm{in } \;\; \Omega.
\end{array}
\right.
\end{equation}
 \end{example}

We set
\begin{equation}
\label{space-plate}Z=H_0= L^2(\Omega),\quad H_1= H^2_0(\Omega),\quad   H_{-1} =   H^{-2}(\Omega).
\end{equation}
Multiplying  system   \eqref    {4.4}  by $\Phi \in (H^2_0 (\Omega))^N $ and integrating by parts, we obtain the following variational formulation:
\begin{equation}
\int_{\Omega}\big(\langle U'', \Phi\rangle+ \langle\Delta U,\Delta \Phi\rangle+  \langle AU, \Phi\rangle\big)dx+\int_{\omega} \langle DU', \Phi\rangle dx=0. \label    {4.5}
\end{equation}

Define the linear operator $L\,:\,H^2_0(\Omega) \to H^{-2}(\Omega)$    by
\begin{align}\label{operator-plate}
\langle\!\langle Lu, \phi\rangle\!\rangle_{H^{-2}(\Omega); H^2_0(\Omega)}  = \int_\Omega \langle\nabla u,  \nabla  \rangle\phi dx, \;\;
\forall\;  u ,\;\phi \in  H^2_0(\Omega) .
 \end{align}
 The damping operator $g=\chi_\omega I$ is continuous  from $H^2_0(\Omega)$ into $L^2(\Omega)$  and satisfies
\begin{align*}\langle\!\langle  g^*g  v, \phi\rangle\!\rangle_{H^{-2}(\Omega); H^2_0(\Omega)} =\langle\!\langle  g  v,  g\phi\rangle\!\rangle_{L^2(\Omega)}  =\int_\omega v \phi dx, \;\;
\forall\;  v, \phi \in  H^2_0(\Omega).
 \end{align*}
 Then the  variational problem \eqref    {4.5}   can be rewritten in the form \eqref{1.3}.

Let $\mathcal A$ be defined by \eqref{3.1}-\eqref{3.2} with
 ${\frak D}(\mathcal A) = ((H^4(\Omega))^N\cap  (H^2_0(\Omega))^N ) \times   (H^2_0(\Omega))^N.$
By Proposition  \ref{th3.1},  $\mathcal A$ generates a C$_0$ semigroup of  contractions  on
$(H^2_0(\Omega))^N \times (L^2(\Omega))^N$.

\medskip

 \begin{theoreme}  \label  {th4.2}    Assume that
 $\omega\subset \Omega$  is a subdomain containing a neighborhood of the boundary $\Gamma$.  Assume furthermore that the matrices $A$ and $D $ satisfy Kalman's rank condition \eqref{1.4} and $\|AD-DA\|$ is sufficiently small.
  Then, there exists a constant $M>0$ such that   for any given  initial data $(U_0, U_1) \in \frak D(\mathcal A) $, the solution $U$ to system  \eqref   {4.4} satisfies the polynomial decay estimate
\begin{equation}  \label   {4.6}
\|(U, U')\|_{ (   H^2(\Omega))^N \times  ( L^2(\Omega))^N} \leqslant \frac{M}{t^{1/2}}\|(U_0, U_1)\|_{  (   H^4(\Omega))^N \times  ( H^2(\Omega))^N},\;\; t\geqslant 1.
\end{equation}
\end{theoreme}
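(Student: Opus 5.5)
The plan is to deduce Theorem \ref{th4.2} from Theorem \ref{th1.1} with $r=0$, following the argument already used for Theorem \ref{th4.1}. The setting is the one in \eqref{space-plate}: $H_0=Z=L^2(\Omega)$, $H_1=H^2_0(\Omega)$, $H_{-1}=H^{-2}(\Omega)$. The embedding $H^2_0(\Omega)\subset L^2(\Omega)$ is dense and compact by Rellich's theorem. The operator $L$ is the duality map associated with the bilinear form $\int_\Omega \Delta u\,\Delta\phi\,dx$. This form is an equivalent inner product on $H^2_0(\Omega)$, by elliptic regularity for the Dirichlet bilaplacian (or simply by Poincaré inequalities together with the identity $\|\Delta u\|_{L^2}=\|D^2u\|_{L^2}$ on $H^2_0$). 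So the variational formulation \eqref{4.5} is exactly of the form \eqref{1.3}, and Proposition \ref{th3.1} applies. The matrix hypotheses (i) of Theorem \ref{th1.1} are assumed in the statement.

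We next check the regularity condition (iii). For $g=\chi_\omega I$ we have
$$\|gu\|_{L^2(\Omega)}=\|u\|_{L^2(\omega)}\leqslant \|u\|_{L^2(\Omega)}=\|u\|_{H_1}^0\|u\|_{H_0}^{1},$$
so \eqref{1.5} holds with $r=0$. Hence the rate in \eqref{1.7} is $t^{-1/2}$, which is the rate claimed in \eqref{4.6}. The norm on $\frak D(\mathcal A)$ is equivalent to the $(H^4)^N\times(H^2)^N$ norm on $((H^4)^N\cap(H^2_0)^N)\times(H^2_0)^N$. This follows from elliptic regularity for $\Delta^2$ with clamped boundary conditions: $\mathcal L U=-AU-D\chi_\omega V+(\text{data in }L^2)$ gives $U\in H^4$.

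The remaining and main step is hypothesis (ii): the scalar plate equation $u''+\Delta^2u+\lambda u+\chi_\omega u'=0$ with clamped boundary conditions must be exponentially stable in $H^2_0(\Omega)\times L^2(\Omega)$, uniformly enough for each fixed $\lambda\geqslant 0$ (each eigenvalue $\lambda_l$ of $A$). When $\omega$ contains a neighborhood of $\Gamma$, the standard route is multiplier methods plus unique continuation. Concretely:
\begin{itemize}
\item use the multiplier $h\cdot\nabla u$ with $h$ a cutoff of $x-x_0$ supported near the boundary, together with $u$ times a cutoff;
\item estimate the energy by the localized kinetic energy on $\omega$ plus lower-order terms;
\item remove the lower-order terms by a compactness–uniqueness argument, using unique continuation for $\Delta^2+\lambda-\beta^2$ on eigenfunctions vanishing on $\omega$.
\end{itemize}
Alternatively, one can verify the resolvent criterion of Huang/Prüss directly by a contradiction argument in the spirit of Proposition 3.3. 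I would quote known results (e.g.\ \cite{tebou1998, lagnese1989, komornik}) for local viscous damping of the Kirchhoff/Euler–Bernoulli plate near the boundary. I would note that the zero-order term $\lambda u$ is a compact perturbation that does not affect the argument, since $\lambda\geqslant0$ keeps the energy coercive. This citation step is the real content. With it, Theorem \ref{th1.1} yields \eqref{4.6}.
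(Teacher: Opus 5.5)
Your proposal is correct and takes the same route as the paper. The paper likewise checks \eqref{1.5} with $r=0$ for $g=\chi_\omega I$, invokes exponential stability of the scalar clamped plate with damping localized near $\Gamma$ for every $\lambda\geqslant 0$ (citing \cite{lks}), and applies Theorem \ref{th1.1} to obtain the rate $t^{-1/2}$. Your additional checks of the functional setting and of the $H^4\times H^2$ description of $\frak D(\mathcal A)$ are fine, but for the citation step you should cite a plate-specific result on interior localized damping rather than the wave and boundary-damping references you list.
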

\begin{proof} The damping operator   $g=\chi_\omega I$   satisfies   the  regularity condition \eqref{1.5} with $r=0$:
$$\|g u\|_{L^2(\Omega)} =\|u\|_{L^2(\omega)} \leqslant \|u\|_{L^2(\Omega)}.$$
Since the subdomain $\omega$ contains a neighborhood of  $\Gamma$, the following problem
\begin{equation}\begin{cases}
u'' +\Delta^2u  + \lambda u + \chi_\omega u'=  0 & \mbox{in }  \rr^+\times
\Omega,\\
\partial_\nu u =u =0&\mbox{on }
\rr^+\times \Gamma
\end{cases}
\end{equation}
is uniformly  stable for any $\lambda\ge 0 $ (\cite{lks}).  Applying Theorem \ref  {th1.1}  with $r=0$  yields the decay estimate \eqref{4.6},
whose optimality   will be further examined in Example \ref{example1}.
\end{proof}

\medskip

 \begin{example} \label{ex43} {\bf Coupled wave equations with local Kelvin-Voigt damping.}
 \begin{equation}\label   {4.7} \begin{cases}U''-\Delta U + AU -a \Div (D\nabla aU')=  0 & \rm{in }\;\;   \rr^+\times\Omega,\\
U=0&\rm{on }\;\; \rr^+\times \Gamma,\\
U(0)=U_0,\quad U'(0)=U_1& \rm{in }\;\;  \Omega,\end{cases}\end{equation}
where  $a \in C^2(\Omega)$ is  non-negative  and its support contains a neighborhood of $\Gamma$. \end{example}

We set
\begin{equation}
\label{space-kv}
Z=H^1(\Omega), \quad H_0= L^2(\Omega),\quad H_1= H^1_0(\Omega),\quad   H_{-1} =   H^{-1}(\Omega).
\end{equation}
Multiplying  system  \eqref   {4.7}  by $\Phi \in (   H^1_0(\Omega))^N$ and integrating by parts, we obtain the following variational formulation:
\begin{equation}
\int_{\Omega}\big(\langle U'', \Phi\rangle + \langle\nabla U,\nabla \Phi\rangle+  \langle AU, \Phi\rangle\big)dx+\int_{\Omega} \langle D\nabla aU', \nabla a\Phi\rangle dx =0.
\label   {4.8}
\end{equation}

Let the linear operator $L$ be defined as \eqref{operator-wave}. The damping operator $ g  = (-\Delta)^{1/2}a$  is continuous   from $H_0^1(\Omega)$ into $ L^2(\Omega)$ and satisfies
\begin{align*}\langle\!\langle  g^*g  v, \phi\rangle\!\rangle_{H^{-1}(\Omega); H^1_0(\Omega)} =\langle\!\langle  g  v,  g\phi\rangle\!\rangle_{L^2(\Omega)}
=\int_{\Omega} \langle\nabla (av), \nabla (a\phi )\rangle dx,
\quad \forall \; v, \phi \in H_0^1(\Omega).
 \end{align*}

 The variational problem  \eqref   {4.7}  can be  written as \eqref{1.3}.
Let $\mathcal A$ be defined by \eqref{3.1}-\eqref{3.2} with
$${\frak D}(\mathcal A) = \{(U, V)\in  (H_0^1(\Omega))^N \times ( H_0^1(\Omega))^N\;  : \;
\mathcal A (U, V) \in  ( H_0^1(\Omega) )^N\times  ( L^2 (\Omega))^N \}.
$$
By Proposition  \ref{th3.1},  $\mathcal A$ generates a C$_0$ semigroup of  contractions  on  the space
$  ( H_0^1(\Omega) )^N\times  ( L^2 (\Omega))^N$.

\medskip
 \begin{theoreme}  \label   {th4.3}     Assume that  the support of the function $a$  contains a neighborhood of the boundary $\Gamma$. Assume furthermore that the matrices $A$ and $D $ satisfy Kalman's rank condition \eqref{1.4} and $\|AD-DA\|$ is sufficiently small.
 Then, there exists a constant $M>0$ such that   for any given  initial data $(U_0, U_1) \in \frak D(\mathcal A)$, the solution $U$ to  system  \eqref   {4.7}  has a polynomial decay rate:
\begin{equation}  \|(U, U')\|_{(   H_0^1(\Omega))^N \times  ( L^2(\Omega))^N} \leqslant \frac{M}{t^{1/4}}\|(U_0, U_1)\|_{ (   H^2(\Omega))^N \times  ( H_0^1(\Omega))^N}, \;\;  t\geqslant 1.
\label   {4.9}
\end{equation}
\end{theoreme}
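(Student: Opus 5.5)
The plan is to apply Theorem \ref{th1.1} with $r=1$, since $t^{-1/(2(1+r))}=t^{-1/4}$ exactly when $r=1$. This requires three checks: the abstract setting and hypothesis (i), the exponential stability (ii) of the scalar Kelvin--Voigt problem, and the regularity condition \eqref{1.5}.

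Hypothesis (i) is assumed in the statement. The setting is the one already built for Example~\ref{ex43}: $H_1=H^1_0(\Omega)$, $H_0=L^2(\Omega)$, $L$ given by \eqref{operator-wave}, and the damping operator $g u=(-\Delta)^{1/2}(au)$. This operator satisfies $\|gu\|_{L^2}=\|\nabla(au)\|_{L^2}$, so that $g^*g v=-\Div(a\nabla(av))$ in the weak sense. Strictly speaking the damping term is $-a\Div(D\nabla(aU'))$, which is not literally $D\mathcal G^*\mathcal G U'$. I will use the variational formulation \eqref{4.8}, which does have this form and is exactly how the paper identifies the system with \eqref{1.3}. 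Proposition~\ref{th3.1} then gives the contraction semigroup.

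For \eqref{1.5} with $r=1$, I use that $a\in C^2$ is bounded with bounded gradient on $\overline\Omega$:
\[
\|gu\|_{L^2}=\|\nabla(au)\|_{L^2}\leqslant \|a\|_\infty\|\nabla u\|_{L^2}+\|\nabla a\|_\infty\|u\|_{L^2}\leqslant c\|u\|_{H^1_0(\Omega)}.
\]
This is \eqref{1.5} with $r=1$, and it is the only exponent available in general. The Kelvin--Voigt term gives no gain in powers of $\beta_n$ in \eqref{3.26}, because $\|\mathcal G U_n\|=\mathcal O(1)$ only.

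The main obstacle is hypothesis (ii): the exponential stability in $H^1_0\times L^2$, for every $\lambda\geqslant 0$, of
\[
u''-\Delta u+\lambda u-\Div\big(a\nabla(au')\big)=0,\qquad u|_\Gamma=0.
\]
I intend to cite the known results on wave equations with localized Kelvin--Voigt damping, under the condition that the damping region contains a neighborhood of the whole boundary and the coefficient is smooth. In that situation the region satisfies the geometric control condition, and regularity of $a$ removes the interface transmission obstruction that occurs for discontinuous coefficients. If a direct argument is needed instead, I would use the frequency-domain (Huang--Pr\"uss) criterion with a contradiction sequence $\beta_n\to\infty$ and $\|(i\beta_n-\widetilde{\mathcal A})(u_n,v_n)\|\to0$. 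Dissipation gives $\|\nabla(av_n)\|_{L^2}\to0$. Since $a>0$ on the neighborhood $\omega$ of $\Gamma$ (shrinking it if necessary so that $a\geqslant a_0>0$), this together with $v_n=i\beta_nu_n+o(1)$ controls the energy of $u_n$ near $\Gamma$. A multiplier $q\cdot\nabla u_n$, with $q$ equal to $x-x_0$ away from $\omega$ and cut off near $\Gamma$, then propagates this to the interior. The regularity of $a$ is used here to bound the commutators arising from $\Div(a\nabla(a\cdot))$ against the multiplier.

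The case $\lambda>0$ is a compact perturbation and is handled in the same way. Uniqueness on $i\mathbb R$ follows from unique continuation for $-\Delta u+\lambda u=\beta^2u$ with $u=0$ on $\omega$. With (i)--(iii) verified, Theorem~\ref{th1.1} with $r=1$ yields \eqref{4.9}.
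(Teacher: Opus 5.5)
Your plan is the paper's proof. You check \eqref{1.5} with $r=1$ via $\|\nabla(au)\|_{L^2(\Omega)}\leqslant c\|u\|_{H^1_0(\Omega)}$, take the exponential stability of the scalar Kelvin--Voigt problem for every $\lambda\geqslant 0$ from the literature (the paper cites \cite{raoliu2004}), and apply Theorem~\ref{th1.1} with $r=1$.

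One slip needs fixing. For $gu=(-\Delta)^{1/2}(au)$ one has $\langle\!\langle g^*gv,\phi\rangle\!\rangle_{H^{-1}(\Omega);H^1_0(\Omega)}=\int_\Omega\nabla(av)\cdot\nabla(a\phi)\,dx$, so $g^*gv=-a\,\Delta(av)$, not $-\Div(a\nabla(av))$; the latter is not even symmetric unless $a$ is constant. Consequently:
\begin{itemize}
\item the term $-a\Div(D\nabla(aU'))$ in \eqref{4.7} is literally $D\mathcal G^*\mathcal G U'$, so your worry there is unfounded;
\item the scalar problem whose exponential stability hypothesis (ii) requires is $u''-\Delta u+\lambda u-a\,\Delta(au')=0$, not the equation you wrote.
\end{itemize}
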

\begin{proof}
The  operator  $ g  = (-\Delta)^{1/2}a$    satisfies   the regularity condition \eqref{1.5} with $r=1$:
\begin{equation}
    \label{gkv}
\|gu\|_{L^2(\Omega)} =  \|au\|_{H^1_0(\Omega)}   \leqslant c\|u\|_{H_0^1(\Omega)}.
\end{equation}
On the other hand, when the support of  $a$ contains a neighborhood of  $\Gamma$, the following scalar problem
\begin{equation}\begin{cases}
u'' -\Delta u  +  \lambda u + a\Div(\nabla( au'))=  0 & \mbox{in }  \rr^+\times
\Omega,\\
u=0&\mbox{on }
\rr^+\times \Gamma
\end{cases}
\end{equation}
is uniformly  exponentially stable for all $\lambda\ge 0$ (see \cite{raoliu2004}).  Applying  Theorem \ref  {th1.1}  with $r=1$, we obtain the
decay rate \eqref{4.9}, whose  optimality  will be proved  in  Exemple \ref{example2}. \end{proof}

\medskip

\begin{example} \label{ex44}   {\bf Coupled wave equations with boundary damping.}
\begin{equation}\begin{cases}
U''-\Delta U +AU=  0 & \rm{in } \;\;  \rr^+\times
\Omega,\\
\partial_\nu U + DU'=0&\rm{on } \;\;
\rr^+\times \Gamma,\\
U(0)=U_0,\quad U'(0)=U_1& \rm{in } \;\;  \Omega,
 \label    {4.10}  \end{cases}
\end{equation}
\end{example}
where $\nu$ denotes unit outer normal vector  on the boundary.

Let
\begin{equation}
\label{space-b}
Z=L^2(\Gamma), \quad H_0=\Big\{f\in L^2(\Omega)\, : \, \int_\Omega f dx = 0\Big\},
\quad H_1= H^1(\Omega) \cap H_0,\quad   H_{-1} =   (H_1)'.\end{equation}
Multiplying  system   \eqref    {4.10}  by $\Phi \in  \mathcal  H_1$ and integrating by parts, we obtain the following variational formulation:
\begin{equation}
\int_{\Omega}\big(\langle U'', \Phi\rangle  + \langle\nabla U,\nabla \Phi\rangle +  \langle AU, \Phi\rangle \big)dx+\int_{\Gamma} \langle DU', \Phi\rangle d\Gamma=0.\label    {4.11}
\end{equation}

\noindent
Let $L$ be  a linear operator   from $H^1(\Omega)$ into $H^{-1}(\Omega)$ defined  by
\begin{align*}
\langle\!\langle Lu, \phi\rangle\!\rangle_{H^{-1}(\Omega); H^1(\Omega)}  = \int_\Omega \langle\nabla u, \nabla \phi \rangle dx,
\;\; \forall \; u, \phi \in H_1.
 \end{align*}
 and let
 $g=\delta_\Gamma$ be  the trace operator  from $H^1(\Omega)$ into $ L^2(\Gamma)$ such that
\begin{align}\label   {4.12}  \langle\!\langle g^*gv, \phi\rangle\!\rangle_{H^{-1}(\Omega); H^1(\Omega)}
 = \langle\!\langle  g  v,  g\phi\rangle\!\rangle_{L^2(\Gamma)}  = \int_\Gamma v \phi\,d\Gamma,
\;\; \forall \; v, \phi\in H_1.
 \end{align}
 Then the  variational problem \eqref    {4.11}   can be  written as \eqref{1.3}.
Let $\mathcal A$ be defined by \eqref{3.1}-\eqref{3.2}.
By Proposition  \ref{th3.1},  the operator $\mathcal A$ generates a $C_0$ semi-group of  contractions  on
$ \mathcal  H_1 \times  \mathcal  H_0$.

\medskip
 \begin{theoreme} \label   {th4.4}
 Assume that     $\Omega$ satisfies the usual geometrical multiplier condition. Assume furthermore that the matrices $A$ and $D $ satisfy Kalman's rank condition \eqref{1.4} and $\|AD-DA\|$ is sufficiently small.
 Then, there exists a constant $M>0$ such that   for any given  initial data $(U_0, U_1) \in \frak D(\mathcal A) $, the solution $U$ to  system  \eqref   {4.10}  has a polynomial decay rate:
\begin{equation}  \label   {w4.13}
\|(U, U')\|_{\mathcal  H_1 \times  \mathcal  H_0} \leqslant \frac{M}{t^{1/3}}\|(U_0, U_1)\|_{\frak D(\mathcal A)}, \;\; t\ge 1.
\end{equation}
\end{theoreme}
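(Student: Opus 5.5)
The plan is to obtain Theorem \ref{th4.4} as a direct application of Theorem \ref{th1.1} with $r=1/2$. The exponent is then $1/(2(1+r))=1/(2\cdot 3/2)=1/3$, which is exactly the rate claimed in \eqref{w4.13}. Hypothesis (i) of Theorem \ref{th1.1} is assumed verbatim in the statement. So only two things need checking for the trace operator $g=\delta_\Gamma$ on the spaces \eqref{space-b}: the regularity condition \eqref{1.5} with $r=1/2$, and the exponential stability of the scalar problem \eqref{1.6} for every $\lambda\geqslant 0$.

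\textbf{Step 1: setting.} I first confirm that the abstract setting is legitimate. On $H_1=H^1(\Omega)\cap H_0$, the Poincar\'e--Wirtinger inequality shows that $\int_\Omega\langle\nabla u,\nabla\phi\rangle dx$ is an inner product equivalent to the $H^1$ one. Hence $L$ is the duality mapping \eqref{1.1}, and $H_1\subset H_0$ is dense and compact by Rellich's theorem. The trace map is continuous, even compact, from $H^1(\Omega)$ into $L^2(\Gamma)$. Therefore Propositions \ref{th3.1} and \ref{th3.2} apply, and \eqref{4.11} is indeed of the form \eqref{1.3}.

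\textbf{Step 2: regularity condition.} I use the classical trace interpolation inequality
\begin{equation*}
\|u\|_{L^2(\Gamma)}\leqslant c\|u\|_{H^1(\Omega)}^{1/2}\|u\|_{L^2(\Omega)}^{1/2},\quad \forall\, u\in H^1(\Omega).
\end{equation*}
It can be proved by applying the divergence theorem to $u^2 h$, where $h$ is a smooth vector field extending $\nu$, and then using Cauchy--Schwarz:
\begin{equation*}
\int_\Gamma u^2\,d\Gamma=\int_\Omega\big(2u\nabla u\cdot h+u^2\,\Div h\big)dx\leqslant c\|u\|_{L^2}\|u\|_{H^1}.
\end{equation*}
Combining this with the norm equivalence from Step 1 gives \eqref{1.5} with $r=1/2$.

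\textbf{Step 3: scalar exponential stability.} I need that, under the usual geometric multiplier condition, for instance $\Gamma$ star-shaped with respect to some $x_0$ so that $(x-x_0)\cdot\nu\geqslant 0$ or a Lagnese-type variant, the scalar problem
\begin{equation*}
\begin{cases}
u''-\Delta u+\lambda u=0 & \hbox{in }\rr^+\times\Omega,\\
\partial_\nu u+u'=0 & \hbox{on }\rr^+\times\Gamma,
\end{cases}
\end{equation*}
posed on the zero-mean energy space, is exponentially stable for each $\lambda\geqslant 0$. For $\lambda=0$ this is the classical result of Komornik and Lagnese obtained with the multiplier $2(x-x_0)\cdot\nabla u+(n-1)u$ \cite{komornik,lagnese1989}. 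For $\lambda>0$ I would redo the same multiplier identity. The extra term $\lambda u$ only produces lower-order contributions of the form $\lambda\int_\Omega u^2$ and a boundary term $\lambda\int_\Gamma (x-x_0)\cdot\nu\, u^2$. These can be absorbed by a standard compactness--uniqueness argument. Alternatively, I would note that $\lambda u$ is a compact perturbation and combine Proposition \ref{th3.2}-type spectral arguments with a resolvent bound on $i\mathbb R$, in the spirit of Proposition 3.3.

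\textbf{Main obstacle.} I expect Step 3 to be the delicate point. The zero-mean constraint interacts awkwardly with the Neumann-type boundary condition: the damping term $\int_\Gamma u'$ need not vanish, so one must check that the zero-mean subspace is consistent with the dynamics, or equivalently interpret the problem through the variational form \eqref{4.11} posed directly on $H_1$. The uniform resolvent bound in \eqref{3.5} must also hold for all $\beta$ when $\lambda>0$. I would handle this at the resolvent level by a contradiction argument: take a sequence of solutions of \eqref{3.4} with bounded data, apply the multiplier identity, and use the compactness of the trace to reduce to unique continuation for $-\Delta u+\lambda u=\beta^2u$ with $u=\partial_\nu u=0$ on $\Gamma$.

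Once Steps 2 and 3 are in place, Theorem \ref{th1.1} with $r=1/2$ yields \eqref{w4.13}.
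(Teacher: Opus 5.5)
Your proposal is correct and follows the paper's proof: Theorem \ref{th1.1} is applied with $r=1/2$, the regularity condition \eqref{1.5} comes from the trace interpolation inequality \eqref{4.14}, and the exponential stability of the scalar problem for $\lambda\geqslant 0$ is taken from the multiplier method of \cite{komornik}. Your additional checks go beyond the paper, which simply asserts these facts; they are the Poincar\'e--Wirtinger setting on the zero-mean space, the divergence-theorem proof of the trace inequality, and your remarks on $\lambda>0$ and on reading the problem through the variational form \eqref{4.11}.
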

\begin{proof}
By  Sobolev interpolation theorem,   the damping operator $g=\delta_\Gamma$  satisfies   the regularity condition \eqref{1.5}  with $r=1/2$:
\begin{equation}\label   {4.14} \|gu\|_{L^2(\Gamma)}=\|u\|_{L^2(\Gamma)} \leqslant c\|u\|^{1/2}_{H^1(\Omega)} \|u\|^{1/2}_{L^2(\Omega)}.\end{equation}
On the other hand,  when $\Omega$  satisfies  the geometrical control condition, the following scalar system  with $\lambda\geqslant 0$:
\begin{equation}\begin{cases}
u'' -\Delta u  +\lambda u=  0 & \mbox{in }  \rr^+\times
\Omega,\\
\partial_\nu u +u'=0&\mbox{on }
\rr^+\times \Gamma
\end{cases}
\end{equation}
is exponentially   stable (see \cite{komornik}).  We  can   therefore apply  Theorem \ref  {th1.1}   with $r=1/2$ to complete the proof.
  \end{proof}

\medskip

\begin{remark}
Our result improves upon the earlier work in \cite{alabau2002}, where  the decay rate of ${1\over t^{1/4}}$ was established for two weakly coupled  wave equations  with a single boundary damping.
Moreover, the following result improves further the decay rate   \eqref{w4.13} for one-dimensional problem.
\end{remark}

\medskip

\begin{example} \label{ex45}  {\bf One-dimensional coupled wave equations with boundary damping.}
We now examine in detail the one-dimensional case of \eqref{4.10} and refine the decay rate in \eqref{w4.13}, improving it from $1/t^3$ to $1/t^2$.
\begin{equation}\label   {4.15} \begin{cases}U'' - U_{xx} + AU=  0&{\rm in} \;\; \rr^+\times (0,1), \\
 U(0)=0 & {\rm in} \;\; \rr^, \\U_x(1)  +  DU'(1)=0& {\rm in} \;\; \rr^.
 \end{cases}\end{equation}
\end{example}

Let
$$
Z=\mathbb R, \quad H_0= L^2(0,1),
\quad H_1= \{f\in H^1(0,1)\,:\, f(0) =0\} ,\quad   H_{-1} =   H^{-1}(0,1).
$$
Multiplying  system   \eqref    {4.15}  by $\Phi \in  (H^1(0, 1))^N$ and integrating by parts, we obtain the following variational formulation:
\begin{equation}
\int_0^1\big(\langle U'', \Phi\rangle + \langle U_x, \Phi_x\rangle +  \langle AU, \Phi\rangle  \big)dx+\langle DU'(1), \Phi(1)\rangle =0.\label    {4.16}
\end{equation}

Define a linear operator $L: H_1 \to H_{-1}$ by
\begin{align*}
\langle\!\langle Lu, \phi\rangle\!\rangle_{H^{-1}(0, 1); H^1(0, 1)}  = \int_0^1 u_x \phi_xdx, \;\; \forall \; u,\; \phi \in H_1.
 \end{align*}
The Dirac's mass  $g=\delta_1$ is  continuous from $H^1(0, 1)$ into $ \mathbb R$,  satisfying
\begin{align} \langle\!\langle g^*gv, \phi\rangle\!\rangle_{H^{-1}(0, 1); H^1(0, 1)} = v(1)\phi(1).
 \end{align}
Hence,  the  variational problem \eqref    {4.16}   can be  written in the abstract form \eqref{1.3}.
Let $\mathcal A$ be defined by \eqref{3.1}-\eqref{3.2}.
By Proposition  \ref{th3.1},  the operator $\mathcal A$ generates a C$_0$ semigroup of  contractions  on
$ \mathcal  H_1  \times  \mathcal   H_0$.

\medskip
 \begin{theoreme} \label   {th4.5}  Assume that the matrices $A$ and $D $ satisfy Kalman's rank condition \eqref{1.4} and $\|AD-DA\|$ is sufficiently small.
 Then, there exists a constant $M>0$ such that   for any given  initial data $(U_0, U_1) \in \frak D(\mathcal A) $, the solution $U$ to system  \eqref   {4.15}  satisfies the polynomial decay estimate:
\begin{equation}  \label   {4.17}
\|(U, U')\|_{ ( H^1(0, 1))^N\times  (L^2(0, 1))^N} \leqslant \frac{M}{t^{1/2}}\|(U_0, U_1)\|_{ (H^2(0, 1))^N\times  (H^1(0, 1))^N},\;\; t\ge 1.
\end{equation}
\end{theoreme}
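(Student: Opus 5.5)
The plan is to derive Theorem \ref{th4.5} from the abstract machinery of Section \ref{3}, using Remark \ref{rt} instead of the interpolation inequality \eqref{4.14}. The trace estimate \eqref{4.14} only gives $r=1/2$, and hence the rate $t^{-1/3}$. In one space dimension, however, a multiplier argument should show that, along the resolvent equations \eqref{rt2}, the boundary value $U(1)$ is of order $\beta^{-1}$ times the data. That is exactly condition \eqref{rt1} with $r=0$. Once \eqref{rt1} holds with $r=0$, the proof of Theorem \ref{th1.1} goes through verbatim: \eqref{3.26} is the only place where \eqref{1.5} is used. This gives \eqref{3.14} with $\theta=1/2$, and Lemma \ref{th3.3} yields \eqref{4.17}.

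The ingredients are checked in this order. First, the hypotheses (i) on $A,D$ are assumed in the statement. Second, the scalar problem $u''-u_{xx}+\lambda u=0$, $u(0)=0$, $u_x(1)+u'(1)=0$ is exponentially stable for every $\lambda\geqslant0$ in $H_1\times H_0$. This is classical: use the multiplier $x u_x$, or observe that the energy identity together with the exact observability at $x=1$ is unaffected by the bounded perturbation $\lambda u$. Hence Proposition \ref{th3.2} applies and gives $i\mathbb R\subset\rho(\mathcal A)$. Third, and this is the only new input, we must prove \eqref{rt1} with $r=0$.

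For \eqref{rt1}, let $(U,V)$ solve \eqref{rt2} with data $(\widetilde F,\widetilde G)$ and $\beta$ large. Eliminating $V=i\beta U-\widetilde F$ gives
$$\beta^2U+U_{xx}-AU=-\widetilde G-i\beta\widetilde F \quad\hbox{in }(0,1),$$
together with $U(0)=0$ and $U_x(1)=-DV(1)$. We multiply this equation by $x\,\overline{U_x}$, integrate over $(0,1)$ and take real parts. The standard identities $2\,{\rm Re}\int_0^1 x\,U\cdot\overline{U_x}\,dx=|U(1)|^2-\|U\|^2$, and the analogous one for $U_x$, produce
$$\beta^2|U(1)|^2+|U_x(1)|^2=\beta^2\|U\|^2_{\mathcal H_0}+\|U_x\|^2_{\mathcal H_0}+\hbox{(remaining terms)}.$$
The weight $x$ kills the contribution from $x=0$. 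The $AU$ term is bounded by $\|U\|^2$, and the $\widetilde G$ term by $\|\widetilde G\|\,\|U\|_{\mathcal H_1}$. The right-hand side $\beta^2\|U\|^2+\|U_x\|^2$ is bounded by $c(\|(U,V)\|^2+\|\widetilde F\|^2_{\mathcal H_1})$, since $\beta U=-i(V+\widetilde F)$.

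The main obstacle is the term $\beta\int_0^1 x\,\widetilde F\cdot\overline{U_x}\,dx$, because $\beta U_x$ is not controlled in $L^2$. I would integrate by parts to move the derivative onto $\widetilde F\in\mathcal H_1$:
$$\beta\int_0^1 x\,\widetilde F\cdot\overline{U_x}\,dx=\widetilde F(1)\cdot\overline{\beta U(1)}-\int_0^1 (x\widetilde F)_x\cdot\overline{\beta U}\,dx.$$
The second term is bounded by $\|\widetilde F\|_{\mathcal H_1}\|\beta U\|_{\mathcal H_0}$. The first is bounded by $\frac12\beta^2|U(1)|^2+c\|\widetilde F\|_{\mathcal H_1}^2$, using the one-dimensional trace estimate $|\widetilde F(1)|\leqslant c\|\widetilde F\|_{\mathcal H_1}$. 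The piece $\frac12\beta^2|U(1)|^2$ is then absorbed into the left-hand side. This yields
$$\beta|U(1)|\leqslant c\big(\|(U,V)\|_{\mathcal H_1\times\mathcal H_0}+\|(\widetilde F,\widetilde G)\|_{\mathcal H_1\times\mathcal H_0}\big),$$
that is $\|\mathcal G U\|_{\mathcal Z}=|U(1)|\leqslant c\beta^{-1}(\cdots)$, which is \eqref{rt1} with $r=0$.

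Finally, I would double-check that in the contradiction argument of Theorem \ref{th1.1}, applied with $\theta=1/2$, the data $\widetilde F=F_n/\beta_n^{2}$ and $\widetilde G=G_n/\beta_n^{2}$ tend to $0$. Then \eqref{3.26} holds with $r=0$, and the remaining steps (iii)--(iv) are unchanged. This concludes the proof of \eqref{4.17}.
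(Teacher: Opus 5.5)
Your proposal is correct and follows the paper's route. The paper likewise invokes Remark~\ref{rt} with $r=0$: it proves the hidden-regularity bound \eqref{4.20} with the multiplier $xU_x$ on \eqref{rt2}, then runs the proof of Theorem~\ref{th1.1} unchanged. The differences are only bookkeeping: the paper pairs the differentiated first equation with $x\overline{V}$ (so $\widetilde F_x$ meets $V$ and $|V(1)|^2$ appears on the left), whereas you eliminate $V$ and integrate the $\beta\widetilde F\cdot\overline{U_x}$ term by parts; also, the boundary term $|A^{1/2}U(1)|^2$ coming from $AU$, which your sketch omits, is harmless since it is absorbed by $\beta^2|U(1)|^2$ for $\beta$ large.
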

\begin{proof}
It is known that the following  scalar problem is exponentially stable:
\begin{equation}\begin{cases}\displaystyle
u''  -  u_{xx} +\lambda u =  0  &{\rm in} \;\; \rr^+\times (0,1),  \;\lambda\ge 0, \\
u(0) =  0, \quad u_{x}(1)  + u'(1)  =0& {\rm in} \;\; \rr^+.
\label    {4.19}  \end{cases}
\end{equation}
Note that a sharper estimate for the operator $\mathcal G$ can be obtained with $r = 0$, even though the regularity condition \eqref{1.5} with $r = 0$,
$ \|gu\|_{Z}= |u(1)| \leqslant  c\|u\|_{L^2(0, 1)}$,
does not hold for all functions $u \in H^1(0,1)$.
Nevertheless, an estimate based on hidden regularity can be established, and Remark \ref{rt} can be applied to obtain the desired result.

In fact,  multiplying two equations in \eqref{rt2}  by $xU_{x}$ and add the results up, we get
\begin{align*}
\|V(1)\|_{{\mathbb R}^N}^2  +\|U_{x}(1)\|_{{\mathbb R}^N}^2 +  \| A^{1\over2} U(1)\|_{{\mathbb R}^N}^2\\
 =   \int_0^1( \|V\|_{{\mathbb R}^N}^2 +\|U_{x}\|_{{\mathbb R}^N}^2 -\|A^{1\over2} U \|_{{\mathbb R}^N}^2 )dx
-2 \int_0^1 x(\langle \widetilde {F}_x, V\rangle_{{\mathbb R}^N}   +  \langle \widetilde {G},  U_x\rangle_{{\mathbb R}^N} )dx.
 \end{align*}
 Substituting  $i\beta  U -V  =\widetilde {F}$ into the above equality and using the assumption that $\beta$ is sufficiently large, we conclude that there exists a positive constant  $c$ such that
 \begin{equation}
 \|\mathcal GU\|_{\mathcal Z}= \|U(1)\|_{{\mathbb R}^N} \leqslant \frac{c}{\beta}\big(\|(U, V)\|_{\mathcal H_1\times \mathcal  H_0}+\|(\widetilde {F}, \widetilde {G})\|_{\mathcal H_1\times \mathcal  H_0}\big).
 \label   {4.20} \end{equation}
Then we  apply Remark~\ref{rt} with $
r=0$ to obtain \eqref{4.17},  whose  optimality will be assessed in Example \ref{example3} later.
\end{proof}

\medskip

\medskip

\begin{example}\label{ex47}   {\bf Coupled systems of thermo-elasticity.} \end{example}
The  asymptotic behavior  the system of thermo-elasticity has been intensively analyzed  in recent years. The classical  thermo-elasticity model  is based on Fourier's law, where the heat flux
is proportional to the gradient of temperature. In this case, a sudden temperature change at one point propagates instantaneously throughout the medium, which is physically unrealistic.
To address this issue, Green and Naghdi introduced the so-called thermo-elasticity model of type III \cite{gn1, gn2}, given by
\begin{equation} \label{th-1}
\begin{cases}
w'' - \mu \Delta  w- (\lambda + \mu) \nabla \div  (w) + \nabla \theta' = 0 &  {\rm in} \;\; \rr^+ \times \Omega, \\
\theta'' - \Delta \theta' - \Delta \theta + \div( w' ) = 0 & {\rm in} \;\; \rr^+ \times   \Omega, \\
w = 0, \quad \theta = 0 & {\rm on} \;\; \rr^+ \times \Gamma,
\end{cases}
\end{equation}
where  $w=(w_1, w_2, w_3)^T$ represents the displacement and  $ \theta$ denotes the temperature.

For any given test function $\phi  =(\phi_1, \phi_2, \phi_3)^T \in  (H^1_0(\Omega))^3$ and $\eta\in H^1_0(\Omega)$, we have the following variational formulation
\begin{align}\begin{cases}\displaystyle
\int_{\Omega} (\langle w'', \phi\rangle +  \mu \langle \nabla w, \nabla \phi \rangle  + \hbox{div}(w) \times  \hbox{div} (\phi ) +
 \langle \nabla \theta', \phi \rangle)dx  =0, \\\displaystyle
\int_{\Omega} ( \theta'' \eta +  \langle \nabla \theta, \nabla \eta \rangle  + \langle \nabla \theta', \nabla\eta\rangle
+    \hbox{div}(w')\times \eta ) dx  =0,\end{cases}\label{varia}
\end{align}

Let
$$u= \begin{pmatrix}   w_1\\w_2\\ w_3\\ \theta \end{pmatrix},\quad \varphi= \begin{pmatrix}  \phi_1\\\phi_2\\ \phi_3 \\ \eta \end{pmatrix},$$
and  $$
H_0 = (L^2(\Omega))^{4},\;\;  H_1 = (H^1_0(\Omega))^{4},\;\;  H_{-1} = (H^{-1}(\Omega))^{4}.
  $$
Define the linear mapping $L$ from $H_1$ onto $H_{-1} $ by
 $$  \langle \! \langle Lu, \varphi  \rangle \! \rangle_{H_1,H_{-1}} = \int_{\Omega}  (\mu \langle \nabla w, \nabla \phi \rangle  + \hbox{div} (w )\times  \hbox{div} ( \phi)
 + \langle \nabla \theta, \nabla \eta \rangle )dx,$$
 the linear  dampings $g$ and $g_0$ from $H_1$ to  $H_0$ by
  $$gu=  \begin{pmatrix}0\\(-\Delta)^{1/2}\theta\end{pmatrix} \quad \hbox{and} \quad g_0u = \begin{pmatrix} \nabla \theta\\ \hbox{div} (w)\end{pmatrix}.$$
  Then we write \eqref{varia}  into the following form
\begin{equation} \label{th-2}
u''  + Lu + g^*g u'+ g_0 u'= 0\quad \hbox{in } H_{-1},
\end{equation}
which is  exponentially stable (see \cite{zua}) when $\Omega $ satisfies the  geometry control condition.

Now let
$$ U=  \begin{pmatrix} u^{(1)}\\u^{(2)}\\\vdots\\u^{(N)}\end{pmatrix}\quad \hbox{where} \quad u^{(i)} = \begin{pmatrix}  w^{(i)}_1\\w^{(i)}_2\\w^{(i)}_3\\\theta^{(i)} \end{pmatrix},\quad i=1,\cdots, N$$
Denote by  $ \mathcal L,  \mathcal  G$  and $ \mathcal  G_0$ the vector-value operators of $L,  g$  and $ g_0$ in the space
 $$ \mathcal   H_0 = \mathcal  (L^2(\Omega))^{4N},\;\;  \mathcal  H_1 = (H^1_0(\Omega))^{4N},\;\; \mathcal   H_{-1} = (H^{-1}(\Omega))^{4N}.$$
 Construct the enlarged  matrices $\widetilde {A}$ and $ \widetilde {D} $ by
\begin{equation} \label{coupling}\widetilde {A} = (a_{ij } I_{4}), \quad  \widetilde D = (d_{ij} I_4),
\end{equation}
where $A=(a_{ij })$ and $D=(d_{ij })$   are symmetric and positive semi-definite matrices of order $N$ and $I_4$ is the identity  of order $4$.

We consider the coupled system consisting of $N$ thermo-elastic subsystems \eqref{th-2}:
\begin{equation} \label{th-3}
U''  + \mathcal L U +  \widetilde A U +  \widetilde D  \mathcal G^* \mathcal G U'+   \widetilde D \mathcal G_0 U' = 0 \quad \hbox{in } \mathcal H_{-1},
\end{equation}
Due to  the specific form of the  matrices in \eqref{coupling}, we have
$$(\widetilde AU)^{(i)} = \sum_{j=1}^Na_{ij}u^{(j)},\quad  (\widetilde D \mathcal G^* \mathcal G U)^{(i)} = \sum_{j=1}^Nd_{ij}
 g^* gu^{(j)}, \quad (\widetilde D \mathcal G_0U)^{(i)} = \sum_{j=1}^Nd_{ij}g_0u^{(j)}
,\quad i=1, 2,\ldots, N.$$
In this context, the vector  $u^{(j)}$ is  treated as a single entity,  there is no interaction between  the components
$u^{(i)}_k$ and $u^{(j)}_l$ for $k\not =l$, that is, cross-component dependencies are not modeled.

On the other hand,  we have $$\|gu\|_{H_0} =  \|\theta\|_{H^1(\Omega)} \leqslant \|u\|_{H_1}\quad \hbox{and} \quad \|g_0u\|_{H_0}\leqslant \|u\|_{H_1}.$$
Therefore, the  operators $g$  and $g_0$ are continuous  from $(H^1_0(\Omega))^4$ to $(L^2(\Omega))^4$ and
satisfy  the regularity condition \eqref{1.5} with $r=1$. Moreover, it is easy to check that
$$\langle\! \langle  g_0u, u \rangle  \!\rangle_{H_0}= 0,\quad \forall u\in H_1.$$
So the coupling term $\widetilde DG_0U'$  in \eqref{th-3} has no impact on the system's dissipation.  Consequently, Theorem \ref{th1.1}  is still applicable to system \eqref{th-3}.
\medskip

\begin{theoreme}  \label   {th-thermo}    Assume $\Omega $ satisfies the   geometry control condition.  Assume furthermore that the matrices $A$ and $D $ satisfy Kalman's rank condition \eqref{1.4} and $\|AD-DA\|$ is sufficiently small.
 Then, there exists a constant $M>0$ such that   for any given  initial data $(U_0, U_1) \in \frak D(\mathcal A)$, the solution $U$ to  system  \eqref   {th-3} satisfies the polynomial decay estimate
\begin{equation} \|(U, U')\|_{ (H^1_0(\Omega))^{4N} \times (L^2(\Omega))^{4N}} \leqslant \frac{M}{t^{1/4}}\|(U_0, U_1)\|_{ (H^2(\Omega))^{4N}\times (H^1_0(\Omega))^{4N}},\;\; t\geqslant 1. \label{thermodecay}
\end{equation}
\end{theoreme}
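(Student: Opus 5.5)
The plan is to reduce Theorem \ref{th-thermo} to Theorem \ref{th1.1} with $r=1$, which gives the rate $t^{-1/2(1+r)}=t^{-1/4}$. The only genuine difference from the abstract framework is the extra skew term $\widetilde D\mathcal G_0U'$. I would therefore rerun the resolvent argument of Section~\ref{3}, in the energy space $\mathcal H_1\times\mathcal H_0$ with the inner product \eqref{norm} (taking $A$ replaced by $\widetilde A$), for the enlarged generator
\[
\mathcal A(U,V)=(V,\,-\mathcal LU-\widetilde AU-\widetilde D\mathcal G^*\mathcal GV-\widetilde D\mathcal G_0V),
\]
and check each step while keeping track of the new term.

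First, well-posedness and $0\in\rho(\mathcal A)$. After diagonalizing $A$ by an orthogonal $P$, and hence $\widetilde A$ by $P\otimes I_4$, the structure of \eqref{th-3} is preserved. Dissipativity follows as in \eqref{3.5b}: the contribution of $\widetilde D\mathcal G_0V$ to $Re\langle\!\langle \mathcal A W,W\rangle\!\rangle$ is $-Re\sum_{i,j}d_{ij}\langle\!\langle g_0v^{(j)},v^{(i)}\rangle\!\rangle_{H_0}$. To handle it I would use that the bilinear form $\langle\!\langle g_0u,w\rangle\!\rangle_{H_0}=\int_\Omega(\langle\nabla\theta,\omega\rangle+\div(w)\,\eta)\,dx$ is antisymmetric after integration by parts. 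Here $w=(\omega,\eta)$, $\omega$ is the displacement part and $\eta$ the temperature part. Combined with the symmetry of $D$, this makes the contribution purely imaginary, so $\mathcal A$ is still dissipative with $Re\langle\!\langle\mathcal AW,W\rangle\!\rangle=-\langle\!\langle\widetilde D\mathcal GV,\mathcal GV\rangle\!\rangle$. Lax--Milgram gives $0\in\rho(\mathcal A)$ as in Proposition~\ref{th3.1}, since $\widetilde D\mathcal G_0F$ for $F\in\mathcal H_1$ is an $\mathcal H_0$ datum.

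Second, the scalar building block. The estimate \eqref{3.5} must hold for the scalar resolvent problem
\[
i\beta v+Lu+\lambda u+\delta(g^*gv+g_0v)=f_0,
\]
uniformly in $\beta\in\mathbb R$, for each eigenvalue $\lambda\ge 0$ of $A$ and $\delta>0$ of $D$. I would obtain this from the exponential stability of \eqref{th-2} (\cite{zua}) via Huang--Pr\"uss, noting that the zero-order terms $\lambda u$ and the scaling by $\delta$ do not affect the multiplier proof under the geometric control condition. I expect this to be the main obstacle. The diagonalization step (iv) of the proof of Theorem~\ref{th1.1} produces exactly such equations, with $\delta_j(g^*g+g_0)$ in place of $\delta_j g^*g$, and the error $p_j^T(D-A)U_n$ now also contains $p_j^T(\widetilde D\mathcal G_0 V_n)$-type commutator pieces. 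Since $g_0$ is bounded from $H_1$ to $H_0$, these are controlled by $\|AD-DA\|\,\|V_n\|$, hence by the smallness assumption.

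Third, the contradiction argument with $\theta=1/4$. Steps (i)--(iii) go through unchanged. From (i) we get $\|\widetilde D\mathcal GV_n\|=o(1)\beta_n^{-2}$. For \eqref{3.26} we have $\|\mathcal GU_n\|\le c\|U_n\|_{\mathcal H_1}=\mathcal O(1)$, which is the case $r=1$. The new terms in \eqref{3.31}--\eqref{3.32} are $\langle\!\langle(\widetilde D\mathcal G_0V_n)^{(i)},u_n^{(j)}\rangle\!\rangle$. Using $V_n=i\beta_nU_n+o(\beta_n^{-4})$ and the antisymmetry of $g_0$, these terms cancel when the two identities are subtracted, up to $o(\beta_n^{-2})$, provided one uses $\|\mathcal G_0V_n\|$ paired with the $\mathcal O(\beta_n^{-1})$ quantity $\|U_n\|_{\mathcal H_0}$. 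This is the second delicate point; if a direct bound fails, I would symmetrize by pairing $u^{(i)}$ with $u^{(j)}$ in both orders so that the skew parts cancel exactly. Then Lemma~\ref{th2.4} gives \eqref{3.36}, step (iv) gives \eqref{3.47}, and we conclude $\|(U_n,V_n)\|=o(1)$, contradicting \eqref{3.17}. Lemma~\ref{th3.3} then yields \eqref{thermodecay}.
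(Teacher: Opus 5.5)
\textbf{Route compared with the paper.} The paper does not rerun the proof of Theorem \ref{th1.1}. It lifts Kalman's condition to $(\widetilde A,\widetilde D)$ via Lemma \ref{th2.0}, notes that $g,g_0$ satisfy \eqref{1.5} with $r=1$ and that $\langle\!\langle g_0u,u\rangle\!\rangle_{H_0}=0$, and then applies Theorem \ref{th1.1} directly. Your blockwise diagonalization by $P\otimes I_4$ makes the lifting unnecessary, which is fine. The skewness remark settles dissipativity, i.e.\ your first step and step (i). But once you open the proof, as you chose to, the decisive step is (iii), and your treatment of the new term there fails.

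\textbf{The gap in step (iii).} Lemma \ref{th2.4} needs \eqref{3.35} with remainder $o(\beta_n^{-2})$, because $|\langle\!\langle u_n^{(i)},u_n^{(j)}\rangle\!\rangle_{H_0}|\leqslant\|U_n\|^2_{\mathcal H_0}=\mathcal O(\beta_n^{-2})$ already holds trivially. The $\mathcal G_0$-terms are much larger than that.
\begin{itemize}
\item $\|\mathcal G_0V_n\|_{\mathcal H_0}$ is of order $\beta_n$, not $\mathcal O(1)$: $V_n\approx i\beta_nU_n$, $\|U_n\|_{\mathcal H_1}=\mathcal O(1)$, and $g_0$ is first order. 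Pairing it with $\|U_n\|_{\mathcal H_0}=\mathcal O(\beta_n^{-1})$ therefore gives $\mathcal O(1)$.
\item There is no exact cancellation. ``Pairing in both orders'' is exactly what subtracting \eqref{3.31}--\eqref{3.32} already does. Write $(\widetilde D\mathcal G_0V_n)^{(i)}=g_0(DV_n)^{(i)}$ with $D$ acting blockwise, use $V_n\approx i\beta_nU_n$ and $\langle\!\langle g_0x,y\rangle\!\rangle_{H_0}=-\langle\!\langle x,g_0y\rangle\!\rangle_{H_0}$. Up to sign and conjugation conventions, what survives is
\[
-i\beta_n\Big(\langle\!\langle g_0(DU_n)^{(i)},u_n^{(j)}\rangle\!\rangle_{H_0}-\langle\!\langle g_0u_n^{(i)},(DU_n)^{(j)}\rangle\!\rangle_{H_0}\Big).
\]
This involves different rows of $D$ and does not vanish in general; for instance, the coefficient of $\langle\!\langle g_0u_n^{(i)},u_n^{(j)}\rangle\!\rangle_{H_0}$ is $d_{ii}-d_{jj}$.
\item The dissipation bound \eqref{3.22} cannot absorb this term. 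It controls $\widetilde D\mathcal GV_n$, i.e.\ gradients of $D$-combined temperatures, but not divergences of $D$-combined displacements.
\item Running step (iv) first does not rescue it. That gives $\|DU_n\|_{\mathcal H_1}=\mathcal O(\beta_n^{-1})$ and $\|DU_n\|_{\mathcal H_0}=\mathcal O(\beta_n^{-2})$, which bound the surviving term only by $\mathcal O(\beta_n^{-1})$. This is worse than the trivial bound.
\end{itemize}
So the Kalman step, the only place where damping is transferred to the undamped components, does not close. Decoupling the eigenspaces of $A$ in the presence of this first-order, non-dissipative coupling needs a genuinely new argument.

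\textbf{Secondary points.}
\begin{itemize}
\item Step (ii) is not unchanged. \eqref{3.28} acquires $\langle\!\langle\widetilde D\mathcal G_0V_n,U_n\rangle\!\rangle_{\mathcal H_0}$. Up to negligible errors this is $i\beta_n$ times a purely imaginary number, hence real and a priori $\mathcal O(1)$. It becomes $\mathcal O(\beta_n^{-1})$ only after step (iv); that suffices, since only $o(1)$ is needed there.
\item You never re-verify $i\mathbb R\subset\rho(\mathcal A)$, which Lemma \ref{th3.3} requires. For an eigenvector, dissipation yields only $\widetilde D\mathcal GU=0$. The term $i\beta\widetilde D\mathcal G_0U$ remains in the equation, so \eqref{3.7} is not reached.
\item Step (iv) produces no $\mathcal G_0$ commutator pieces, since $p_j^T\widetilde D\mathcal G_0V_n=\delta_jg_0\widehat v_n^{(j)}$ exactly. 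What it does need is exponential stability of a type III system with coupling and diffusion constant $\delta_j$ and a zero-order term $\delta_j u$. That is a different system from \eqref{th-2}, so the citation must cover arbitrary constants.
\end{itemize}
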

\begin{proof}Clearly, the smallness of $\|AD-DA\|$ implies that of $\|\widetilde A\widetilde D-\widetilde D\widetilde A\|$.
 We next show that Kalman's rank condition \eqref{1.4} implies
$$\rank(\widetilde D, \widetilde A\widetilde D, \ldots \widetilde A^{4N-1}\widetilde D)=4N.$$
Otherwise, let $ \widetilde E \in \mathbb R^{4N}$ given by
$$\widetilde E= \begin{pmatrix} E^{(1)}\\\vdots\\E^{(N)}\end{pmatrix},\quad \hbox{where}\quad E^{(i)}\in \mathbb R^4,\quad i=1,\ldots, N$$ be an eigenvector of $\widetilde A$ contained in $\Ker(\widetilde D)$,  namely,
\begin{equation}\sum_{j=1}^Na_{ij}E^{(j)} =\lambda E^{(i)}\quad \hbox{and}\quad \sum_{j=1}^Nd_{ij}E^{(j)}=0, \quad i=1,\ldots, N.
\label{eigenvector}\end{equation}
Let
$E^{(k)}\not =0$ for some $1\leqslant k\leqslant N$.
By taking a scalar product on \eqref{eigenvector} with $E^{(k)}$, and setting
$$x_i=\langle E^{(k)}, E^{(i)}\rangle, \quad i=1,\ldots, N,$$
we get
$$\sum_{j=1}^Na_{ij}x_j =\lambda x_i\quad \hbox{and}\quad  \sum_{j=1}^Nd_{ij}x_j =0,\quad i=1,\ldots, N.$$
Since $x_k\not = 0$, then $x=(x_1,\dots, x_N)^N\in \mathbb R^N$  would be an  eigenvector of $A$ contained in $\Ker(D)$.
By Lemma \ref{th2.0} with $p=0$, this contradicts Kalman's rank condition \eqref{1.4}.

Then applying Theorem \ref{th1.1}  to system \eqref{th-3}, we obtain  the polynomial decay rate \eqref{thermodecay}.
Moreover, similarly to the decay rate \eqref{4.9} in Theorem \ref{th4.3}, the decay rate \eqref{thermodecay} is indeed sharp. \end{proof}

\bigskip

 \section{Optimality by spectral analysis}    \label{5}
 \subsection{Optimality}
We proceed to analyze the optimality of the results obtained in the previous section by means of spectral analysis.
In fact, Lemma \ref{th3.4} asserts that the decay rate ${1\over t^{\theta}}$ is optimal whenever the resolvent estimate \eqref{3.14} is established and condition \eqref{eq-opti} is fulfilled for at least one branch of the spectrum (\cite{liu2015}).

\medskip

\begin{example} \label{example1}
Let
\begin{equation}\label{5.0}
A= \begin{pmatrix}1& 0\\
0& 2 \end{pmatrix},\quad D= \begin{pmatrix}1& 2\\
2&4\end{pmatrix}, \quad U= \begin{pmatrix}u\\ v\end{pmatrix}. \end{equation}
We consider a special case of system \eqref{4.1} with these matrices.
\begin{equation}\label    {5.1} \begin{cases}u'' -\Delta u+u +u'+2 v'=  0 &{\rm in} \;\; \rr^+ \times \Omega, \\
v''-\Delta v+2v +2u' +4 v'=  0 &  {\rm in} \;\; \rr^+ \times \Omega,\\
u=v=0 & {\rm on} \;\; \rr^+ \times \Gamma.\end{cases}\end{equation}
\end{example}

It is clear that  assumption (i) of Theorem \ref{th1.1} is satisfied since
\begin{equation}\det(D)=0,\quad \rank(D, AD)=2,\quad \|AD-DA\|=2. \label    {5.2}  \end{equation}
Furthermore, the damping operator  $g=I$ satisfies the regularity condition \eqref{1.5} with $r=0$.
We now verify that the decay rate $\mathcal O(t^{-1/2})$ in \eqref{4.3} is in fact optimal.

For $\beta \in \mathbb R$, consider the corresponding eigenvalue problem:
 \begin{equation}  \label    {5.3} \begin{cases}\beta^2u -\Delta u+u +\beta u+2\beta  v=  0& \hbox{in } \Omega, \\
\beta^2v -\Delta v+2 v +2\beta u +4\beta  v=  0 & \hbox{in } \Omega,\\
u=v=0 & \hbox{on  } \Gamma.\end{cases}\end{equation}
Let  \begin{equation}  \begin{cases}-\Delta \phi =\nu_n^2\phi& \hbox{in } \Omega, \\
u=v=0 & \hbox{on  } \Gamma.\end{cases} \end{equation}
Setting
\begin{equation} \label{solution}
  u=a\phi, \quad v=b\phi
\end{equation}   in \eqref    {5.3},         we obtain
 \begin{equation}\begin{cases}(\beta^2 + \nu_n^2  + 1+\beta) a+  2b\beta =0,\\
2a\beta + (\beta^2 + \nu_n^2  +  2 + 4\beta) b=0,
\end{cases}\end{equation}
which admits a nontrivial solution if and only if
\begin{equation}(\beta^2 +\nu_n^2 +1+\beta)(\beta^2 +\nu_n^2 + 2 +4\beta) - 4\beta^2=0.\label    {5.4} \end{equation}
Let
$z=\beta^2 +\nu_n^2. $
Equation \eqref{5.4} becomes
 \begin{equation} z^2 +(3 +5\beta)z  + (2 +  6\beta) =0. \label    {5.5}  \end{equation}
A straightforward calculation yields
\begin{align*}2z&= -(3 +5\beta)\pm (25\beta^2+6\beta+ 1)^{1/2}\\&=- (3+5\beta)+5\beta
\Big(1+\frac{6}{25\beta} +\frac{1}{25\beta^2}\Big)^{1/2}\\
& = - \frac{12}{5} +\frac{8}{125\beta} + \frac{o(1)}{\beta^2}.\\
\end{align*}
It follows that
\begin{align} \label    {5.6} \beta_{n}&= i\nu_n\Big(1+\frac{6}{5\nu_n^2} -\frac{4}{125\nu_n^2\beta_n}+ \frac{o(1)}{\beta_n^2\nu_n^2}\Big)^{1/2}\\&
= i\nu_n\Big(1+\frac{3}{5\nu_n^2} - \frac{2 }{125\nu_n^2\beta_n} + \frac{o(1)}{\beta_n^2\nu_n^2}\Big) \notag \\
&
= i\nu_n+\frac{3i}{5\nu_n} - \frac{2i}{125\nu_n\beta_n} + \frac{o(1)}{\beta_n^2\nu_n}.  \notag\end{align}
Substituting $$ \beta_{n} = i\nu_n  +\frac{o(1)}{\nu_n}$$ into \eqref    {5.6},  we obtain the asymptotic expansion of a branch of eigenvalues:
\begin{align}\beta_{n}= i\Big(\nu_n +\frac{3}{5\nu_n}\Big)
 - \frac{2}{125\nu_n^2}+ \frac{o(1)}{\nu_n^{2}}. \end{align}
By Lemma \ref{th3.4}, the decay rate of system \eqref{5.1} cannot exceed $\mathcal O(t^{-1/2})$.
Thus, the decay rate $\mathcal O(t^{-1/2})$ in \eqref{4.3}, predicted by Theorem \ref{th4.1}, is indeed optimal.

This example confirms  also the optimality of the decay rate in \eqref{4.6}.

\medskip

\begin{example}   \label{example2}  Consider the following special case of system \eqref{4.7}:
\begin{equation}\begin{cases}u'' -\Delta u+u -\Delta u'-2 \Delta  v'=  0&{\rm in} \;\; \rr^+ \times \Omega, \\
v''-\Delta v+ 2v - 2\Delta u' -4 \Delta v'=  0  &{\rm in} \;\; \rr^+ \times \Omega,\\
u=v=0  &{\rm on} \;\; \rr^+ \times  \Gamma.\end{cases}\label{K-V}\end{equation}
\end{example}

It clear the coupling matrix $A$ and damping matrix $D$ are defined by \eqref{5.0} and satisfy assumption (i) of Theorem \ref{th1.1}. The damping operator $g=(-\Delta)^{1/2}$ satisfies the regularity condition \eqref{1.5} with $r=1$ by \eqref{gkv}.
We now verify the optimality of the decay rate $\mathcal O(t^{-1/4})$ in \eqref{4.9}.

For $\beta \in \mathbb R$, consider the eigenvalue problem
 \begin{equation}\label    {5.7} \begin{cases}\beta^2u -\Delta u+u -\beta \Delta u-2\beta \Delta  v=  0& \hbox{in } \Omega, \\
\beta^2v -\Delta v+ 2v -2\beta \Delta u -4\beta  \Delta v=  0 & \hbox{in } \Omega,\\
u=v=0 & \hbox{on  } \Gamma. \end{cases}\end{equation}
Let $u=a\phi,\; v=b\phi$ as in \eqref{solution}.
Then
$$\begin{cases}(\beta^2 + \nu_n^2  + 1+\beta\nu_n^2) a+  2b\beta\nu_n^2=0,\\
2a\beta\nu_n^2 + (\beta^2 + \nu_n^2  + 2+ 4\beta\nu_n^2) b=0,
\end{cases}$$
which has a nontrivial solution iff
\begin{equation}(\beta^2 +\nu_n^2 +1+\beta\nu_n^2)(\beta^2 +\nu_n^2 + 2+4\beta\nu_n^2) - 4\beta^2\nu_n^4=0.\label    {5.8} \end{equation}
Let
$$z=\beta^2 +\nu_n^2.$$
Equation \eqref{5.8} reduces to
$$z^2 +(3+5\beta\nu_n^2)z  + (2 +  6\beta\nu_n^2)=0.$$
Proceeding as in the analysis of \eqref{5.5}, we find
\begin{align*}2z&= -(3+5\beta\nu_n^2)\pm (25\beta^2\nu_n^4+6 \beta\nu_n^2+ 1)^{1/2}\\
&=-(3 +5\beta\nu_n^2)\pm 5\beta\nu_n^2
\Big(1+\frac{6}{25\beta\nu_n^2} +\frac{1}{25\beta^2\nu_n^4}\Big)^{1/2}\\
& = - \frac{12}{5} +\frac{8}{125\beta\nu_n^2} + \frac{o(1)}{\beta^2\nu_n^2}.\\
\end{align*}
A direct computation gives
\begin{align} \label    {5.9} \beta_{n}&= i\nu_n\Big(1+\frac{3}{5\nu_n^2} -\frac{4 }{125\nu_n^4\beta_n}+ \frac{o(1)}{\beta_n^2\nu_n^4}\Big)^{1/2}\\&
= i\nu_n\Big(1+\frac{3}{5\nu_n^2} - \frac{2}{125\beta_n\nu_n^4} + \frac{o(1)}{\beta_n^2\nu_n^4}\Big) \notag \\
&
= i\nu_n+\frac{3i}{5\nu_n} - \frac{2i}{125\beta_n\nu_n^{2}} + \frac{o(1)}{\beta_n^2\nu_n^{2}}.  \notag\end{align}
Substituting
$$ \beta_{n} = i\nu_n  +\frac{o(1)}{\nu_n}$$ into \eqref{5.9}, we obtain the asymptotic expansion of a branch of eigenvalues:
\begin{align}\beta_{n}= i\Big(\nu_n +\frac{3}{5\nu_n}\Big)
 - \frac{2}{125\nu_n^4}+ \frac{o(1)}{\nu_n^5}.\end{align}
By Lemma \ref{th3.4}, the decay rate of system \eqref{K-V} cannot exceed $\mathcal O(t^{-1/4})$.
Hence, the decay rate $\mathcal O(t^{-1/4})$ predicted by Theorem \ref{th4.3} is sharp.

\medskip

\begin{example}  \label{example3}
Let
 $$A= \begin{pmatrix}1& 0\\
0&0\end{pmatrix},\quad D= \begin{pmatrix}1& -1\\
-1&1\end{pmatrix},\quad U= \begin{pmatrix}u\\ v\end{pmatrix}.$$
 Consider the following special case of system \eqref{4.10}:
\begin{equation}\label    {5.10} \begin{cases}u'' - u_{xx} +u=  0 &{\rm in} \;\; \rr^+ \times (0,1), \\
v''- v_{xx} =  0 &{\rm in} \;\; \rr^+ \times (0,1),
\\ u(0)=v(0)=0 &{\rm in} \;\; \rr^+  ,
 \\u_x(1)  +  u'(1) - v'(1)=0  &{\rm in} \;\; \rr^+, \\
v_x(1) -u'(1) +  v'(1)=0 &{\rm in} \;\; \rr^+. \end{cases}\end{equation}
\end{example}
It is clear that
$$\det(D)=0,\quad \rank(D, AD)=2,\quad \|AD-DA\|=1.$$
Then   assumption (i) of Theorem \ref{th1.1} is true.
Moreover, by Theorem \ref{th4.5}, the Dirac mass $g=\delta_1$ satisfies the weaker regularity condition \eqref{4.20}.

We now verify the optimality of the decay rate $\mathcal O(t^{-1/2})$ in \eqref{4.17}.
For $\beta \in \mathbb R$, consider the eigenvalue problem
\begin{equation}\begin{cases}\beta^2u -u_{xx}+ u=  0,& 0<x<1,\\
\beta^2v -v_{xx}=  0, & 0<x<1,\\
u(0)=v(0)=0,\\ u_x(1) +\beta (u(1) - v(1))=0, \\
v_x(1) + \beta (-u(1) +  v(1))=0.\end{cases}\end{equation}
Let
$$\beta_1 =  \sqrt{\beta^2+1}, \quad u= a\sinh ( \beta_1 x),\quad v= b\sinh(\beta x).$$
The boundary conditions then yield
{  $$\begin{cases}a  (\beta_1 \cosh   \beta_1  +\beta \sinh   \beta_1 ) - b  \beta \sinh  \beta  = 0,\\
-a \beta  \sinh  \beta_1 + b \beta (\sinh \beta+  \cosh \beta)=0,\end{cases}$$
 }
 which admits a nontrivial solution iff
\begin{equation}\label    {5.12} \frac{ \beta_1 }{\beta}\cosh ( \beta_1 )(\sinh \beta+ \cosh \beta) +  \cosh  \beta  \sinh ( \beta_1 )=0.
\end{equation}
Using
$$\frac{ \beta_1 }{\beta} = 1+\frac{1 }{2\beta^2 } +\frac{o(1)}{\beta^4},\quad \beta_1 = \beta +\frac{1 }{2\beta } +\frac{o(1)}{\beta^{3}},$$
we expand equation \eqref{5.12} asymptotically in $1/\beta$.
This yields
$$\Big(2 +\frac{1 }{2\beta} + \frac{1 }{4\beta^2 } { + \frac{o(1)}{\beta^3} } \Big)\cosh\beta \sinh \beta +  \Big(\frac{1 }{2\beta}{ + \frac{o(1)}{\beta^3} }\Big)\sinh^2\beta + \Big(1+ \frac{1 }{2\beta} +\frac{5 }{8\beta^2 } { + \frac{o(1)}{\beta^3} } \Big)\cosh^2\beta=0.$$
Passing to $e^{\beta}$, we obtain
 \begin{align*}\Big(3 +\frac{3 }{2\beta}  + \frac{11}{8\beta^2} { + \frac{o(1)}{\beta^3} }\Big )e^{4\beta} +   \Big(2 + \frac{5}{4\beta^2 } { + \frac{o(1)}{\beta^3} } \Big)e^{2\beta} - \Big (1+\frac{1 }{8\beta^2} \Big) =\frac{o(1)}{\beta^3}.
 \end{align*}
It follows that
$$e^{2\beta} = -1+\frac{1 }{4\beta } - \frac{5 }{16\beta^2} +\frac{o(1)}{\beta^3}.$$
Then,
 \begin{align*}2\beta = \ln (-1)+  \ln \Big(1-\frac{1 }{4\beta } + \frac{5}{16\beta^2} + \frac{o(1)}{\beta^3} \Big)  =  \ln (-1) -{  \frac{1 }{4\beta } }+ \frac{9 }{32\beta^2}+\frac{o(1)}{\beta^3}.
  \end{align*}
From this we deduce the asymptotic expansion of a branch of eigenvalues:
 \begin{align}
 \beta_n=  \Big(n+{1\over2}\Big)\pi i  -{  \frac{1 }{8\beta_n }} + \frac{9 }{64\beta_n^2} + \frac{o(1)}{\beta_n^3}
 ={  \Big(  n\pi+{\pi\over2} + \frac{1}{8n\pi}\Big) i  -\frac{9}{64n^2  \pi^2 } } +\frac{o(1)}{n^3}. \notag\end{align}

Another branch of eigenvalues possesses a negative real part.
Therefore, the decay rate $\mathcal O(t^{-1/2})$ predicted by Theorem \ref{th4.5} is indeed optimal.

\bigskip
\subsection{Conclusion}
In this paper, we have developed a general framework for analyzing the polynomial stability of weakly coupled systems with a reduced number of feedback damping mechanisms. The abstract results have been applied to various models, including wave equations with local viscous or viscoelastic damping, and plate equations with local damping. In these cases, the derived decay rates are shown to be sharp. Furthermore, we have established the decay rates for wave equations with boundary damping. However, the optimality of the obtained decay rates remains an open question in the general case.

Despite these advances, a number of natural questions remain open for future research:
\begin{enumerate}

\item  What are the necessary and sufficient conditions for stability of the strongly coupled system $$U'' + \mathcal L U +A  \mathcal L^\beta U  +  D\mathcal G^*\mathcal GU'=  0,$$
where $\beta > 0,\; U, \; \mathcal L $ are defined in \eqref{z1.6} and $A,\; D$ denote the coupling and control matrices?

\item  Can one determine necessary and sufficient conditions for the stability of the more general system $$U'' + \mathcal L U +AU  + D \mathcal L^\gamma U' =  0,$$
where $ \gamma\ge0,\; U, \; \mathcal L $ are defined in \eqref{z1.6} and $A,\; D$ again represent coupling and control matrices?

\item  We aim to  consider the coupled system of wave equations with different speeds of propagation and develop a general theory
for the polynomial stability of second order evolutional equations.

\item {\Red How can the present analysis be extended to more general coupled systems, in which the coupling and control matrices are not necessarily symmetric or positive semidefinite, while still guaranteeing stability?}

\end{enumerate}

\end{document}